\font\fr=eufm10 scaled \magstep 1 
\newtheorem{teor}{Theorem}
\newtheorem{prop}{Proposition}
\newtheorem{corol}{Corollary}
\newtheorem{definition}{Definition}
\theoremstyle{definition}
\theoremstyle{remark}
\def\beq{\begin{equation}}
\def\eeq{\end{equation}}
\def\bea{\begin{eqnarray}}
\def\eea{\end{eqnarray}}
\def\beann{\begin{eqnarray*}}
\def\eeann{\end{eqnarray*}}
\def\beasn{\begin{sneqnarray}}
\def\eeasn{\end{sneqnarray}}
\def\ben{\begin{enumerate}}
\def\een{\end{enumerate}}
\def\bit{\begin{itemize}}
\def\eit{\end{itemize}}
\def\dst{\displaystyle}
\def\derpar#1#2{\frac{\partial{#1}}{\partial{#2}}}
\def\coor#1#2#3{{#1}^{#2}, \ldots, {#1}^{#3}}
\def\moment#1#2#3{{#1}_{#2}, \ldots, {#1}_{#3}}
\def\qed{\ifvmode\removelastskip\fi
{\unskip\nobreak\hfil\penalty50\hbox{}\nobreak\hfil
\hbox{\vrule height1.2ex width1.2ex}\parfillskip=0pt
\finalhyphendemerits=0 \par\smallskip}}
\def\vf{\mbox{\fr X}}
\def\df{{\mit\Omega}}
\def\Lag{{\cal L}}
\def\d{{\rm d}}
\def\Real{\mathbb{R}}
\def\Tan{{\rm T}}
\def\inn{\mathop{i}\nolimits}
\def\Cinfty{{\rm C}^\infty}
\renewcommand{\neq}{=\hspace{-3.5mm}/\hspace{2mm}}
\title{Newtonian mechanics in a Riemannian manifold}
\author{Miguel C. Mu\~noz-Lecanda\footnote{email:   
miguel.carlos.munoz@upc.edu}
\\[1ex]
\normalsize\itshape\sffamily 
Department of Mathematics,
Universitat Polit\`ecnica de Catalunya,\\ \normalsize\itshape\sffamily 
Barcelona, Spain}
\date{\today} 
\begin{document}
\maketitle

\begin{abstract}
The work done by Isaac Newton more than three hundred years ago, continues being a path to increase our knowledge of Nature. To better understand all the ideas behind it, one of the finest ways is to generalize them to wider situations. In this report we make a review of one of these enlargements, the one that bears the mechanical systems from the elementary homogeneous three dimensional Euclidean space  to the more abstract geometry of a Riemannian manifold. 
\end{abstract}
\tableofcontents
\section{Introduction}
 
Mechanics is an ancient wisdom of humankind. It is not possible to build the Egyptian Pyramids, for example, without some more or less organised intuitive  ideas of mechanics. And they were made near five thousand years ago. Archimedes, 287--212 BC,  was one of those that best exploited mechanical ideas as an interesting instrument not only to conceive several devices but to proof geometric results. As a science, mechanics began to emerge in modern times, in XVI and XVII centuries, when Galileo, Kepler and Newton tried to explain the motion of the planets they knew in the solar system using the old intuitive mechanical and geometric ideas and the carefully  collected observations of the positions of the planets annotated along the time.

Isaac Newton, 1643-1727, was the real founder of what we know as classical mechanics or analytical mechanics, as Lagrange, 1736-1813, called it. The Newton's {\sl Principia}, \cite{Newton}, have been developed along more than three hundred years and we continue increasing their understanding and applications. Near every scientific generation has a novel approach to Newtonian mechanics and has increased the range of applications.

In fact, analytical mechanics has arrived to be a basic tool to found the description of our Universe in a systematic form and it can be said that all our modern knowledge of Nature has been developed following ideas coming at the end from Newtonian mechanics.

Geometry has always been related to mechanics. They have influenced each other in a very enhancing way along the time, using both disciplines as a source of problems and methods to state and  solve questions in the other. And more and more, new geometric techniques have been introduced in mechanics to obtain new results, or for better understanding older ones, and applications.

In particular, differential geometry is a powerful tool to clarify the deep dependencies between different expressions of the same problem, or solution, and the relations among magnitudes, due to the intrinsic formulation of the theory, that is the independence of the used coordinate system to express the equation under consideration. Indeed, it has proven to be a very adequate way to express the mechanical concepts in a short and more comprehensible form.

As far as we know, the oldest reference to the name ``Mechanics in a Riemannian manifold" is 
used by R. Hermann in \cite{HE1968}, where one chapter has precisely this title. 

But clearly there are previous authors using Riemannian techniques to tackle mechanical problems. We necessarily need to cite Eisenhart, \cite{EISEN-1928} and Synge, \cite{SYN-1926, SYN-1928}, most of these works inspired by Levi--Civita, the real pionneer. Other modern references are \cite{AM-78, Ar-89,BLOCH2015, BULE2005,CC-2005,Ol-02} where this name appears together with geometric mechanics. We will use all of them without specific citation. It is interesting to note the special effort made by some authors, specially A.D. Lewis in \cite{LEWIS, Lewis2018}, in order to justify the way going from classical to geometric mechanics and the usefulness of this last approach.

At this point, it is important to say that this survey is by no means an historical review of classical or geometric mechanics. A very nice survey on historical aspects of geometrical mechanics with an extensive bibliography is \cite{Leon-2017}. Some other historical remarks are contained in \cite{Ar-89} in the introduction of the different chapters.

The aim of this review is to develop some aspects of Newtonian mechanics in a Riemannian setting, hence without reference to the usual vector, or affine, space structure for the configuration manifold. With respect to present developments and applications we will give only some of them with names of people involved and general references in the corresponding sections, references which contain a large amount of more specific references.

With this ideas in mind, the organization of the paper is as follows:
\begin{description}
  \item[Section 2]: Notations, definitions and dynamical equation of the trajectories including the Lagrangian formulation for conservative and more general systems.
   \item[Sections 3 and 4]: We study constrained systems. First with holonomic constraints and secondly with nonholonomic ones. The corresponding d'Alembert principles and the subsequent dynamical equations are stated, both in the Riemannian form and in the Euler-Lagrange setting.
    \item[Section 5]: Is a short section for non autonomous systems.
     \item[Section 6]: Some classical subjects in Newtonian mechanics are included: Hamilton-Jacobi vector fields and geodesic fields and Hamilton-Jacobi equation in a Lagrangian setting. We finish the section with an approximation to stationary Euler equation for fluids as a Hamilton--Jacobi equation for a Newtonian system and comments and references on the relation between solutions to the Hamilton--Jacobi equation and to the associated Schr\"odinger equation for this kind of systems.
      \item[Section 7]: Comments and references on other topics in this approach and applications: symmetries and Noether's theorem and control of mechanical systems.             \item[Section 8]: Conclusions.
\end{description}

Observe that Sections 2, 3 and 4 are the general theory while sections 6 and 7 are devoted to some specific developments and applications where the Riemannian approach is significantly enhancing.

As general references on differential and Riemannian geometry we recommend \cite{Con2001,Lee2013,Lee2018}. With respect to analytical mechanics and geometric mechanics, see \cite{AM-78, Ar-89,Arovas2014, Ga-70,GPS-01,JS-98,LL-76, LM-sgam,MR-99,Ol-02, SC-71, Sch-2005, So-ssd}. A recent reference on the deep relations between geometry and physics is \cite{CIMM-2015}, not only with classical mechanics but including quantum physics.

As is usual in this approach we consider that our manifolds and mappings are of $\mathcal{C}^\infty$--class. Einstein index summation convention is also assumed.

\section{Newtonian dynamical systems}

\subsection{Definitions and dynamical equation}

From a mathematical point of view a \textbf{Newtonian mechanical system} is a triple $(Q,{\bf g},\omega)$, where
\ben
\item
$Q$ is a differentiable manifold ($\dim\, Q=n$).
\item
${\bf g}$ is a {\sl Riemannian metric} in $Q$. Then  $(Q,{\bf g})$ is a Riemannian manifold.
\item
$\omega$ is a differential 1-form in $Q$, called the \textbf{work form}.
\een

Being ${\bf g}$ a Riemannian metric, the work form $\omega\in\df^1(Q)$
is associated to a unique vector field ${\rm F}\in\mathfrak{X} (Q)$ such that
$\inn({\rm F}) {\bf g}=\omega$. We call  ${\rm F}$ the
\textbf{the field of forces} of the system. Clearly we can determine the system with the form of work or with its force field. In this case we denote the system as $(Q,{\bf g},{\rm F})$.

With this elements we have a differential equation: we look for curves,  $\gamma\colon[a,b]\subset\Real\to Q$,
solutions to the equation
\beq\label{Neweq}
\nabla_{\dot{\gamma}}\dot\gamma ={\rm F}\circ\gamma
\eeq
 where $\nabla$ is the {\sl Levi-Civita connection} associated to the Riemannian metric ${\bf g}$.  Recall that, given a Riemannian metric ${\bf g}$ on the manifold $Q$, the Levi-Civita connection $\nabla$ is the unique linear connection which is symmetrical, that is with null torsion tensor field, and Riemannian, that is $\nabla_Z({\bf g}(X,Y))= {\bf g}(\nabla_ZX,Y) + {\bf g}(X,\nabla_ZY)$, for every $X,Y,Z\in\mathfrak{X}(Q)$, or, what is the same $\nabla_Z{\bf g}$ for every $Z\in\mathfrak{X}(Q)$.
 
 Equation (\ref{Neweq}) is called the \textbf{Newton equation}, or dynamical equation, of the system. A {\bf trajectory} of the system is a curve solution to the Newton equation.

The manifold $Q$ is the \textbf{configuration space} of the system. If $\dim Q=n$ we say that the system has $n$ \textbf{degrees of freedom}. Its tangent bundle $\Tan Q$ is the   
 \textbf{phase space of coordinates--velocities} and the \textbf{phase space of coordinates--momenta} is the cotangent bundle $\Tan^*Q$. Both phase spaces are also called  {\bf state spaces}.
 
 As we are describing physical systems, it must exist {\bf observables} in order to make measures and obtain results from the state of the system. The observables of the system with configuration space $(Q,{\bf g})$ are the real algebra $\Cinfty (\Tan Q)$ of smooth functions defined on the phase space. Equivalently the algebra   $\Cinfty (\Tan^*Q)$. The value of an observable $f\in\Cinfty (\Tan Q)$ on a state $(q,v)\in\Tan Q$ is the real number $f(q,v)$.

\bigskip
\noindent{\bf Comment}: If the above definitions and equation are the Riemannian image of the Newtonian mechanics, they must contain in some sense the three Newton laws contained in his ``{\sl Principia Mathematica}", see \cite{Newton}, and it is so: 
\begin{enumerate}
  \item 
Equation (\ref{Neweq}) is no more than the classical $ma=F$, that is ``mass by acceleration is equal to force" written as 
$$
m\frac{\d}{\d t}v=F\, ,
$$ 
where the constant $m$ is contained in the covariant derivative, that is in the metric, which is usually called, as we will see in the sequel, the {\bf kinetic energy metric}. 

\item
Note that if ${\rm F}=0$, then the dynamical trajectories are the geodesic curves of the metric ${\bf g}$. This correspond to the first Newton Law ({\sl Inertia Law}\/). 

This is what, at present, is stated as the existence of \textbf{inertial systems} of coordinates: those coordinate systems where the Newton laws are fulfilled. In our differential geometry formulation not preferred coordinates are used and Inertia Law is a consequence of the dynamical equation.

\item
With respect to the \textsl{third law}, or action and reaction law, there are long and complicated discussions about its rank of application. For detailed comments see \cite{SPIVAK-2004,Spivak2010} and references therein. It seems that its appropriate domain is in the search of models for different types of forces between  bodies in contact or to state the relation of one system and the universe surrounding it. As our systems are studied as isolated ones, this law has no significant meaning in this geometric approach where every system is described and studied by its own structure given by the three defining elements $(Q,{\bf g},\omega)$ without relation to other external elements.

\end{enumerate}

\bigskip
If $(U,\varphi=(x^i))$ is a local chart in $Q$, and $\{\Gamma^k_{ij}\}$
are the corresponding Christoffel symbols of the Levi--Civita connection $\nabla$, then the dynamical equation is locally given by
$$
\ddot\gamma^k+\Gamma_{ij}^k\dot\gamma^i\dot\gamma^j={\rm F}^k\circ\gamma \ .
$$
The relation in coordinates between $\omega$ and $F$ is as follows:  if, $\omega=\omega_i\d x^i$ and \(\dst {\rm F}={\rm F}^i\derpar{}{x^i}\), then we have that
$$
\omega_i=g_{ij}{\rm F}^j \quad , \quad
{\rm F}^i=g^{ij}\omega_j \ ,
$$
where $g^{ij}$ are the components of the inverse matrix of ${\bf g}$ in this local chart (the ``inverse metric'').


It is well known that this relation between $\omega$ and $F$ is a particular case of the so called musical diffeomorphisms associated to the Riemannian metric ${\bf g}$, defined by:
\begin{eqnarray*}
 \flat:\Tan Q\to\Tan^*Q &	&\qquad   (q,v)\mapsto (q,\inn(v){\bf g}) \\  
 \vspace{2mm}\sharp:\Tan^* Q\to\Tan Q& &\qquad (q,\omega)\mapsto (q,\inn(\omega){\bf g}^{-1})\, ,
\end{eqnarray*}
and using these mappings we can write the dynamical equation in dual form: If $\gamma$ is the trajectory of the system, then $\inn(\dot\gamma){\bf g}$ is called the \textbf{linear momentum} and, as $\nabla$ is the Levi--Civita connection, we have that
$$
\nabla_{\dot{\gamma}}(\inn(\dot\gamma){\bf g}) =\inn(\nabla_{\dot{\gamma}}\dot\gamma){\bf g}=\inn({\rm F}\circ\gamma){\bf g}=\inn({\rm F}){\bf g}
\circ\gamma=\omega\circ\gamma\, ,
$$
that is: the dynamical equation (\ref{Neweq}) is equivalent to the dual form:
\begin{equation}\label{newtondual}
 \nabla_{\dot{\gamma}}(\inn(\dot\gamma){\bf g}) =\omega\circ\gamma\,.
\end{equation}

Then, as if ${\rm F}=0$ then $\omega=0$, we have that the linear momentum $\inn(\dot\gamma){\bf g}$ is conserved along the motion which is the dual statement of the inertial law.

\subsection{Euler-Lagrange equations}

Given a Riemannian manifold $(Q,{\bf g})$, we can associate a natural function defined in the tangent bundle, the so called \textbf{kinetic energy}, defined by:
$$
\begin{array}{ccccc}
K & \colon & \Tan Q & \longrightarrow & \Real  \\
& & (q,v) &\mapsto & \frac{1}{2}{\bf g}(v,v)\, ,
\end{array} 
$$
whose local expression is
$$
K(q^i,v^j)=\frac{1}{2}g_{ij}(q)v^iv^j\, .
$$

For a Newtonian system $(Q,{\bf g}, {\rm F})$, with $\omega=\inn({\rm F})g$, and using the kinetic energy $K$, we can transform the Newton equation into a new form 
which is easier to state when we know the elements defining the system.

\begin{teor}:
Let $(Q,{\bf g},\omega)$ a Newtonian mechanical system, and
$\gamma\colon [a,b]\subset\Real\to Q$ a smooth curve contained in the domain $U\subset Q$ of a chart
$(U,\varphi=(q^i))$ of $Q$. Then, $\gamma$ is a solution to the dynamical equation
(\ref{Neweq}) if and only if, it satisfies the equations 
\beq
\frac{d}{d t}\left(\derpar{K}{v^j}\circ\dot\gamma\right)-
\derpar{K}{q^j}\circ\dot\gamma=
(\omega\circ\gamma)\left(\derpar{}{q^j}\right)=\omega_j \circ\gamma
=g_{ij}F^i \circ\gamma\ ,
\label{eel}
\eeq
for every $j$, which are called \textbf{Euler-Lagrange equations of the second kind}
of the system.
\end{teor}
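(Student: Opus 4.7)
The plan is to reduce the equivalence to a purely local computation in a chart $(U,\varphi=(q^i))$, using the coordinate form of Newton's equation already given in the excerpt, namely
$$
\ddot\gamma^k+\Gamma_{ij}^k\dot\gamma^i\dot\gamma^j={\rm F}^k\circ\gamma .
$$
Both sides of the Euler--Lagrange equation are tensorial in the index $j$ (once we remember that lowering the index $k$ on Newton's equation gives precisely the dual form $\nabla_{\dot\gamma}(\inn(\dot\gamma){\bf g})=\omega\circ\gamma$ of equation (\ref{newtondual})), so it is enough to show that the left-hand side of (\ref{eel}) coincides, along any curve $\gamma$, with $g_{jk}(\ddot\gamma^k+\Gamma^k_{i\ell}\dot\gamma^i\dot\gamma^\ell)$.

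First I would compute the two pieces entering (\ref{eel}) directly from the local expression $K(q,v)=\tfrac12 g_{ik}(q)v^iv^k$. Using the symmetry $g_{ik}=g_{ki}$ one gets
$$
\derpar{K}{v^j}=g_{ji}v^i,\qquad \derpar{K}{q^j}=\tfrac12(\partial_j g_{ik})\,v^iv^k .
$$
Evaluating along $\dot\gamma$ and differentiating in $t$ yields
$$
\frac{d}{dt}\!\left(\derpar{K}{v^j}\!\circ\dot\gamma\right)=g_{ji}(\gamma)\ddot\gamma^i+(\partial_k g_{ji})\dot\gamma^k\dot\gamma^i,
$$
so the left-hand side of (\ref{eel}) becomes
$$
g_{ji}\ddot\gamma^i+(\partial_k g_{ji})\dot\gamma^k\dot\gamma^i-\tfrac12(\partial_j g_{ik})\dot\gamma^i\dot\gamma^k .
$$

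The key algebraic step is to symmetrize $(\partial_k g_{ji})\dot\gamma^k\dot\gamma^i$ in $(i,k)$ and recognize the Christoffel symbols of the first kind
$$
\Gamma_{j,ik}=\tfrac12(\partial_i g_{jk}+\partial_k g_{ji}-\partial_j g_{ik})=g_{j\ell}\Gamma^{\ell}_{ik},
$$
so that the above expression collapses to
$$
g_{j\ell}\bigl(\ddot\gamma^\ell+\Gamma^{\ell}_{ik}\dot\gamma^i\dot\gamma^k\bigr).
$$
Combined with the identity $\omega_j=g_{j\ell}{\rm F}^\ell$ from the excerpt, this shows that (\ref{eel}) for every $j$ is the contraction by $g_{j\ell}$ of the local Newton equation, and conversely, since $(g_{j\ell})$ is invertible, the Euler--Lagrange system implies Newton's equation. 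The only real obstacle is the bookkeeping in the symmetrization step; once the three partial-derivative terms are written with the correct pairs of velocity factors, the Christoffel combination appears automatically, and the equivalence follows. No non-trivial global argument is needed because both sides transform as components of the same one-form along $\gamma$, as already encoded in the dual equation (\ref{newtondual}).
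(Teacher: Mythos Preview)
Your proof is correct and follows exactly the approach the paper sketches: a direct local-coordinate computation of the left-hand side of (\ref{eel}) from $K=\tfrac12 g_{ik}v^iv^k$, followed by recognition of the Christoffel symbols of the first kind $[ik,j]=g_{j\ell}\Gamma^\ell_{ik}$ to identify the result with $g_{j\ell}(\ddot\gamma^\ell+\Gamma^\ell_{ik}\dot\gamma^i\dot\gamma^k)$, and then the equivalence via invertibility of $(g_{j\ell})$. The paper states this computation without writing out the intermediate steps; you have simply filled them in.
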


These equations are usually written as:
$$
\frac{d}{d t}\left(\derpar{K}{v^j}\right)-
\derpar{K}{q^j}=\omega_j 
=g_{ij}F^i \, .
$$

The proof is a direct calculus on the function $K$ in local coordinates and using the local expression of the Christoffel symbols of the Levi--Civita connection:
$$
[kl,j]=g_{ij}\Gamma^i_{kl}=
\frac{1}{2}\left(\derpar{g_{jk}}{q^l}+\derpar{g_{jl}}{q^k}-\derpar{g_{lk}}{q^j}\right) \ .
$$

Why have we changed the intrinsic dynamical equation (\ref{Neweq})  into this expression in local coordinates? These equations were obtained by Lagrange in 1788 and published in \cite{Lagrange}  and are related with variational calculus. They are easier to calculate for a particular system than the dynamical equation because you don't need to know the Christoffel symbols of the connection. In fact they have the same ``formal" theoretical expression in every coordinate system, that is, you need to apply the same rule to obtain them independently of the used coordinates, called ``generalised coordinates" by Lagrange.

\bigskip
To finish this section a comment on other kinds of forces.
In this initial section we have preferred to keep us in the case of simple forces, depending only on the position coordinates, but it is usual that the mechanical forces, or the work forms, depend not only on the position coordinates but also on the  time and the velocities. For the case of time depending forces, see Section 5 where a short introduction with references is given.

The dependency on the velocities is the case of dissipative systems or electromagnetic, Lorentz, forces. Geometrically this means that $\omega\in\df^1(Q,\tau_Q)$ and ${\rm F}\in\vf (Q,\tau_Q)$, that is they are forms, or vector fields, along the natural projection of the corresponding phase space on the configuration manifold. In this case the only change we need to do in the dynamical equations is to write $\omega\circ\dot{\gamma}$, or ${\rm F}\circ\dot{\gamma}$, instead of $\omega\circ\gamma$ or ${\rm F}\circ\gamma$, respectively. For a detailed study of general forces in mechanics in a geometric way you can see \cite{God-69, Leon-2021}.

\subsection{Conservative systems}

There is a special kind of Newtonian systems, those which are of conservative, or mechanical Lagrangian, type.

A Newtonian system $(Q,{\bf g},\omega)$ is \textbf{conservative} if the work form is exact, that is, there exists $V\in\Cinfty (Q)$ such that $\omega =-\d V$. The negative sign is a customary tradition in Physics. In this case, the function $V$ is called the \textbf{potential energy} of the system. Observe that in this case the force vector field is  ${\rm F}=-{\rm grad}\ V$.

For these systems $(Q,{\bf g},\omega=-\d V)$,
the \textbf{total energy} or \textbf{mechanical energy} of the system 
is the function $E\in\Cinfty(\Tan Q$ defined as
$$
\begin{array}{ccccc}
E & \colon & \Tan Q & \longrightarrow & \Real  \\
& & (q,v) &\mapsto & K(q,v)+(\tau^*_QV)(q,v)\,.
\end{array}
$$
We usually write $E=K+V$.

As a direct consequence of the definition we have:

\begin{teor}
\textbf{(Mechanical energy conservation)}:
Let $(Q,{\bf g},\omega=-\d V)$ be a conservative Newtonian mechanical system, then
the mechanical energy $E$
is invariant, that is constant along the trajectories of the system.
\end{teor}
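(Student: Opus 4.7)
The plan is to show that $\frac{d}{dt}(E \circ \dot\gamma) \equiv 0$ along any trajectory $\gamma$, by computing the two summands $K \circ \dot\gamma$ and $V \circ \gamma$ separately and using the Newton equation together with the defining properties of the Levi--Civita connection.

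First I would differentiate the kinetic energy along the trajectory. Since $K \circ \dot\gamma = \tfrac{1}{2}\mathbf{g}(\dot\gamma,\dot\gamma)$, the Riemannian (metric-compatibility) property of $\nabla$ gives
\[
\frac{d}{dt}\bigl(K \circ \dot\gamma\bigr) = \frac{1}{2}\frac{d}{dt}\mathbf{g}(\dot\gamma,\dot\gamma) = \mathbf{g}\bigl(\nabla_{\dot\gamma}\dot\gamma,\,\dot\gamma\bigr).
\]
At this point I would invoke the Newton equation (\ref{Neweq}), which gives $\nabla_{\dot\gamma}\dot\gamma = \mathrm{F}\circ\gamma$, together with the definition $\inn(\mathrm{F})\mathbf{g}=\omega$. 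This turns the right-hand side into $\mathbf{g}(\mathrm{F}\circ\gamma,\dot\gamma) = (\omega\circ\gamma)(\dot\gamma)$.

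Next, for the potential part, I would use the conservative hypothesis $\omega=-\dd V$ and the chain rule, namely $\frac{d}{dt}(V\circ\gamma) = (\dd V\circ\gamma)(\dot\gamma) = -(\omega\circ\gamma)(\dot\gamma)$. Adding the two contributions:
\[
\frac{d}{dt}\bigl(E\circ\dot\gamma\bigr) = \frac{d}{dt}\bigl(K\circ\dot\gamma\bigr) + \frac{d}{dt}\bigl(V\circ\gamma\bigr) = (\omega\circ\gamma)(\dot\gamma) - (\omega\circ\gamma)(\dot\gamma) = 0,
\]
so $E\circ\dot\gamma$ is constant on $[a,b]$, which is the claim.

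There is essentially no obstacle here; the whole proof is a one-line unwinding of definitions. If anything needs attention, it is being careful that the equality $\frac{d}{dt}\mathbf{g}(\dot\gamma,\dot\gamma) = 2\mathbf{g}(\nabla_{\dot\gamma}\dot\gamma,\dot\gamma)$ really comes from the metric property of $\nabla$ applied to the vector field $\dot\gamma$ \emph{along} $\gamma$ (as opposed to a globally defined vector field on $Q$); this is the standard extension of $\nabla$ to vector fields along curves and is the only subtle point. An equally valid alternative would be to bypass the intrinsic computation and work in local coordinates from the Euler--Lagrange equations (\ref{eel}) with Lagrangian $L=K-V$, verifying $\frac{d}{dt}(v^j \partial L/\partial v^j - L)=0$; but the intrinsic argument above is shorter and coordinate-free, so I would prefer it.
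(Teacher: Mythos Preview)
Your proof is correct and follows essentially the same route as the paper's: both compute $\frac{d}{dt}(E\circ\dot\gamma)$ by splitting into kinetic and potential parts, use the metric property of $\nabla$ to get ${\bf g}(\nabla_{\dot\gamma}\dot\gamma,\dot\gamma)$, and then cancel against $\d V(\dot\gamma)$ via the Newton equation and $\inn({\rm F}){\bf g}=-\d V$. The paper's write-up is simply a more compressed version of your argument.
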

({\sl Proof\/})\quad
If $\gamma\colon [a,b]\subset\Real\to Q$ is a solution to the Newton equation, then
\beq
\nabla_{\dot\gamma}\dot\gamma={\rm F}\circ\gamma \quad , \quad \inn({\rm F}){\bf g}=
\omega=-\d V \ ,
\label{eqdinsis}
\eeq
then we have that

\beann
\frac{d (E\circ\dot\gamma)}{d t}
&=&
\nabla_{\dot\gamma}(E\circ\dot\gamma)=
\nabla_{\dot\gamma}\left(\frac{1}{2}{\bf g}(\dot\gamma,\dot\gamma)+V\circ\gamma\right)
\\ 
\vspace{2mm}&=&{\bf g}(\nabla_{\dot\gamma}\dot\gamma,\dot\gamma)+\d V(\dot\gamma)=0
\ .
\eeann
\qed

In this case, the Lagrange equations have a simpler expression. Consider the function $\Lag=K-\tau_Q^*V\in\mathcal{C}^\infty(\Tan Q)$, called {\bf Lagrangian} of the system. The Euler--Lagrange equations for  $(Q,{\bf g},\omega=-\d V)$ are
$$
\frac{d}{d t}\left(\derpar{K}{v^j}\circ\dot\gamma\right)-
\derpar{K}{q^j}\circ\dot\gamma=\omega_j\circ\gamma=
(-\d V\circ\gamma)\left(\derpar{}{q^j}\right)=
-\derpar{(\tau_Q^*V)}{q^j}\circ\gamma \ ,
$$
which we can write as
\beq
\frac{d}{d t}\left(\derpar{\Lag}{v^j}\circ\dot\gamma\right)-
\derpar{\Lag}{q^j}\circ\dot\gamma=0 \ ,
\label{eel1e}
\eeq
recalling that \(\dst\derpar{(\tau_Q^*V)}{v^j}=0\). From now on we write $\Lag=K-V$ for simplicity.

The local expression of $\Lag$ is 
$$
 \Lag (q,v)=(K-V)(q,v)= \frac{1}{2}g_{ij}(q)v^iv^j-V(q)\, .
$$
These conservative systems $(Q,{\bf g},\omega=-\d V)$ are called {\bf simple mechanical systems} and the associated Lagrangians {\bf natural lagrangians}.

\section{Holonomic constrained systems. D'Alembert principle}
\protect\label{sdnlh}

A relevant topic in classical mechanics is the study of systems with holonomic or nonholonomic constraints. A {\bf constraint} is a restriction in the motion of the system. This restriction may be in the configuration space: the system is obliged to move in a particular subset of this manifold. Or it may restrict the possible velocities to be reached, to be used to move.

From our geometric approach, the first ones, called {\bf holonomic constraints}, are defined by a submanifold of the configuration space where the system must remain. The other, or {\bf nonholonomic constraints}, by a submanifold of the phase space of coordinate--velocities allowing to reach all the positions in the configuration manifold.

To write the equations of motion we need to add to our postulates some new idea related to the submanifold of constraints. There are several approaches to tackle this problem. The older and best stablished is {\bf d'Alembert principle}, which in geometric terms is specially clarifying on the motion of these kinds of systems. For an historical approach to constrained systems, in particular nonholonomic ones, see \cite{Leon-2012} and the extended bibliography contained there. As d'Alembert principle is not the only one used to obtain the equations of motion, see the previous reference for other known  principles and relations among them. For a nice discussion on d'Alembert principle in different situations with a geometric viewpoint, see \cite{MARLE-98}.

\subsection{Holonomic constraints. Holonomic d'Alembert principle}

Let $(Q,{\bf g},\omega)$ be a Newtonian mechanical system and ${\rm F}\in\vf (Q)$
its force field. Let ${\bf S}$ be a submanifold of $Q$, usually called {\bf submanifold of holonomic constraints}, and
$j\colon ${\bf S}$\hookrightarrow Q$ the natural embedding. 
We intend to describe the dynamics of the given system when it is obliged to evolve in the submanifold ${\bf S}$ of the configuration space, hence with some restriction in the coordinates the system can reach.
To force this behaviour, it is compulsory to apply a new force field ${\rm R}$, called {\bf constraint force}, which obliges the system to remain in ${\bf S}$.
In general, such force depends, not only on the position, but also on the velocity; 
then ${\rm R}\in\vf (Q,\tau_Q)$ and, moreover, we don't know it;
 in fact it is a new unknown to find.

Then we have a new dynamical equation for curves
$\gamma\colon [a,b]\subset\Real\to {\bf S}$, which is
\beq
\nabla_{\dot\gamma}\dot\gamma ={\rm F}\circ\gamma+{\rm R}\circ\dot\gamma \ .
\label{eqdinS}
\eeq

To solve this problem, we introduce the so called \textbf{d'Alembert principle}:
The constraint force ${\rm R}$ is orthogonal to the submanifold ${\bf S}$;
that is, for every $q\in {\bf S}$ and for every $ u,v\in\Tan_q{\bf S}$, we have  ${\bf g}(u,{\rm R}(q,v))=0$, supposing that  ${\rm R}$ depends on the velocities. We impose that the constraint force is orthogonal to the constraint submanifold.

How to obtain the equation of motion and the expression of the constraint force? If $g_{\bf S}=j^*g$, let $\nabla^{\bf S}$ be the Levi-Civita connection in the Riemannian manifoldt $({\bf S},g_{\bf S})$. Then, we have the following natural geometric elements and consequences:
\begin{description}
  \item[a)] For every $q\in {\bf S}$ the orthogonal decomposition $\Tan_qQ=\Tan_q{\bf S}\oplus (\Tan_q{\bf S})^\perp $ and the orthogonal projections,
  \vspace{-2mm}
 $$
\pi_{\bf S}(q)=\colon \Tan_qQ\rightarrow\Tan_q{\bf S}
\quad , \quad
\pi^\perp_{\bf S}(q)=\colon \Tan_qQ\rightarrow (\Tan_q{\bf S})^\perp \ ,
$$
\item[b)] The global orthogonal projections
\vspace{-2mm}
$$
\pi_{\bf  S}=\colon \Tan Q\vert_ {\bf S}\rightarrow\Tan {\bf S}
\quad , \quad
\pi^\perp_{\bf S}=\colon \Tan Q\vert_ {\bf S}\rightarrow \Tan {\bf S}^\perp \ .
$$
Thus d'Alembert principle is reduced to $\pi_{\bf S}\circ{\rm R}=0$.
\item[c)] The Levi--civita connection on the submanifold ${\bf S}$ satisfies $\nabla^ {\bf S}=\pi_ {\bf S}\circ\nabla$.
This can be directly proved because $\pi_ {\bf S}\circ\nabla$ is a $g_{\bf S}$--Riemannian symmetrical connection.
\end{description}

Now, taking the dynamical equation (\ref{eqdinS}) and splitting up it into the tangent and orthogonal  components with respect to ${\bf S}$, we obtain respectively
\bea
\pi_ {\bf S}(\nabla_{\dot\gamma}\dot\gamma) &=&
\pi_ {\bf S}\circ{\rm F}\circ\gamma+\pi_ {\bf S}\circ{\rm R}\circ\dot\gamma =
\pi_ {\bf S}\circ{\rm F}\circ\gamma \ ,
\label{eqdinsplit1}  \\
\pi_ {\bf S}^\perp(\nabla_{\dot\gamma}\dot\gamma) &=&
\pi_ {\bf S}^\perp\circ{\rm F}\circ\gamma+\pi_ {\bf S}^\perp\circ{\rm R}\circ\dot\gamma =
\pi_ {\bf S}^\perp\circ{\rm F}\circ\gamma +{\rm R}\circ\dot\gamma \ .
\label{eqdinsplit2}
\eea
and, denoting by ${\rm F}^{\bf S}=\pi_ {\bf S}\circ{\rm F}\in\vf ({\bf S})$ the projection of
 ${\rm F}$ on ${\bf S}$, equation \eqref{eqdinsplit1} is simply
 \beq
 \nabla^{\bf S}_{\dot\gamma}\dot\gamma = {\rm F}^{\bf S}\circ\gamma \ ;
\label{eqdinresS}
\eeq
that is: the dynamical equation of the Newtonian mechanical system
$({\bf S},g_{\bf S},\omega_{\bf S})$, where $\omega_{\bf S}=\inn({\rm F}^{\bf S})g_{\bf {\bf S}}$.

Observe that solutions to equation (\ref{eqdinresS}) are curves
$\gamma\colon [a,b]\subset\Real\to {\bf S}$ such that, introducing each of them into equation 
(\ref{eqdinsplit2}), allows us to calculate the constraint force ${\rm R}$ for that trajectory 
$\gamma$, obtaining
\begin{equation}\label{constraintR}
  \nabla_{\dot\gamma}\dot\gamma -\nabla^{\bf S}_{\dot\gamma}\dot\gamma=
{\rm F}\circ\gamma-{\rm F}^{\bf S}\circ\gamma +{\rm R}\circ\dot\gamma \ .
\end{equation}

Then we know ${\rm R}\circ\dot\gamma\in\vf (Q,\dot\gamma)$.
Notice that we can calculate the constraint force only on every trajectory of the system but {\bf not} as a vector field depending on the velocities: we need to calculate one trajectory using equation (\ref{eqdinresS}) and then we can obtain the constraint force on this trajectory by means of equation (\ref{constraintR}) where the only unknown is ${\rm R}\circ\dot\gamma$.

\bigskip
As in the case of equation (\ref{newtondual}) we can dualise equation (\ref{eqdinsplit1}) and we obtain
$$
\nabla^{\bf S}_{\dot\gamma}(\inn(\dot\gamma){\bf g}_{\bf S})=\omega_{\bf S}\circ\gamma
$$
being $\omega_{\bf S}=j^*\omega$. In fact we can state the {\bf dual d'Alembert principle} as follows: If $\rho=\inn({\rm R})g$,  then $j^*\rho =0$. And by means of the dual of the above orthogonal projections in the cotangent bundle we can obtain the value of $\rho$ on every trajectory of the system and hence of {\rm R}.

This dual form of d'Alembert principle is also called ``Principle of virtual work": the integral of the work form $\rho$ along any piece of a possible trajectory $\gamma$ of the system is zero.

\bigskip
{\bf Examples:}
\begin{enumerate}
  \item {\bf Systems with one constraint}: Let ${\bf S}=\{ q\in Q \ ;\ \varphi (q)=0\}$, with $\varphi\in\Cinfty (Q)$ 
and suppose that $\d\varphi(q)\neq 0$, for every $q\in {\bf S}$, hence ${\bf S}$ is a hypersurface of $Q$. Let $X\in\vf (Q)$ such that $\inn (X){\bf g}=\d\varphi$, then
$X$ is orthogonal to $ {\bf S}$. In this case we have
$$
\pi^\perp_ {\bf S}({\rm F})=\frac{{\bf g}({\rm F},X)}{{\bf g}(X,X)}X=
\frac{\d\varphi ({\rm F})}{\| \d\varphi\|^2}X \ ,\qquad
{\rm F}^ {\bf S}=\pi_ {\bf S}({\rm F})={\rm F}-\frac{\d\varphi ({\rm F})}{\| \d\varphi\|^2}X \ .
$$

This allows us to find the trajectories of the system as solutions to the differential equation
$$
\nabla^ {\bf S}_{\dot\gamma}\dot\gamma=
{\rm F}\circ\gamma-\frac{\d\varphi ({\rm F})}{\| \d\varphi\|^2}X \ .
$$
Once we have a trajectory solution $\gamma$, the constraint force is given by the equation
$$
\nabla_{\dot\gamma}\dot\gamma-\nabla^ {\bf S}_{\dot\gamma}\dot\gamma=
\frac{\d\varphi ({\rm F})}{\| \d\varphi\|^2}\circ\gamma-{\rm R}\circ\dot\gamma \ ,
$$
where the only unknown is ${\rm R}\circ\dot\gamma$. These expressions are related to the second fundamental form of the hypersurface ${\bf S}$.

\item {\bf Systems with several constraints}: Consider now
$ {\bf S}=\{ q\in Q \ ;\ \varphi_1(q)=0,\ldots ,\varphi_h(q)=0\}$,
with $\varphi_1\ldots ,\varphi_h\in\Cinfty (Q)$, such that
$\d\varphi_1(q),\ldots\d\varphi_h(q)$ are linearly independent at every point $q\in  {\bf S}$ (we assume that $ {\bf S}$ is not empty).
Let $\moment{Z}{1}{n-h}\in\vf (Q)$ such that:
%
\begin{description}
\item[i)] $i(Z_a)d\varphi_{\beta}=0$, for $1\leq \beta\leq h,\,\,1\leq a\leq n-h$,
\item[ii)]$ {\bf g}(Z_a, Z_b)=0, a\not= b$, $1\leq a,b\leq n-h$.
\end{description}
To obtain these vector fields $Z_a$, it is enough to take vector fields
$\moment{X}{1}{n-h}\in\vf (Q)$ satisfying the first condition (a linear equation) and apply the well known {\sl Gramm-Schmidt method}.
In this situation,  we have that
$$
\pi_ {\bf S}({\rm F})=\sum_{a=1}^{n-h}\frac{{\bf g}({\rm F},Z_a)}{{\bf g}(Z_a,Z_a)}Z_a
$$
hence, as in the previous case, we obtain the dynamical equation and the expression of the constraint force along every trajectory solution.
\end{enumerate}

The above examples can be taken as local coordinate expression for a general submanifold of the configuration manifold.

\subsection{Euler-Lagrange equations for holonomic constraints}

We have shown that for a Newtonian mechanical system $(Q,{\bf g},\omega)$
constrained to move on the submanifold $j\colon  {\bf S}\hookrightarrow Q$, the dynamics is given by the Newtonian mechanical system $( {\bf S},g_ {\bf S},\omega_ {\bf S})$.
To write the corresponding Euler-Lagrange equation of this last system, take a local chart $(U,q^i)$ in $ {\bf S}$ and the corresponding natural lifting $(\tau_Q^{-1}(U),q^i,v^i)$
to $\Tan  {\bf S}$. Then we have
\beq
\frac{d}{d t}\left(\derpar{K_ {\bf S}}{v^k}\circ\dot\gamma\right)-
\derpar{K_ {\bf S}}{q^k}\circ\dot\gamma=
(g_ {\bf S})_{ik}{(\rm F^ {\bf S})}^i \ ,
\label{equno}
\eeq
where $K_ {\bf S}\in\Cinfty(\Tan  {\bf S})$ is the {\sl kinetic energy} of the system $( {\bf S},g_ {\bf S},\omega_ {\bf S})$. It i {\bf S} easy to  show that $K_ {\bf S}=(\Tan j)^*K$.

If the dynamical system is conservative, that is $\omega=-\d V$,
then
$$
\omega_ {\bf S}=j^*\omega=-j^*\d V=-\d j^*V \ ,
$$
and the above equation takes the expression
$$
\frac{d}{d t}\left(\derpar{\Lag_ {\bf S}}{v^j}\circ\dot\gamma\right)-
\derpar{\Lag_ {\bf S}}{q^j}\circ\dot\gamma=0 \ ,
$$
where $\Lag_ {\bf S}:=(\Tan j)^*\Lag=(\Tan j)^*(K-V)$.

Notice that the constraint force is {\bf not} in these equations. This was one of the innovations developed by Lagrange, to obtain the dynamical equations without mention to the constraint force.

In fact, if $(W,x^i)$ is a local chart in $Q$, and $(U,q^j)$ is another in ${\bf S}$,
both adapted to the inclusion map $j\colon U\hookrightarrow W$, hence $U={\bf S}\cap W$, given by the local expression 
$x^i=f^i(q)$, and hence
\(\dst \dot x^i=\derpar{f^i}{q^j}\dot q^j\).
This shows that  it is enough to know the Lagrangian function $\Lag$ of the unconstrained system, to introduce these last expressions of $x^i,\dot x^i$ in the Euler-Lagrange equations of the unconstrained system and, by direct derivation, to obtain the Euler-Lagrange equations of this constrained system using the local coordinates $(q^i,\dot{q}^i)$ of $\Tan{\bf S}$, the real phase space of the system. See \cite{Ar-89} for interesting comments on this topic.

\subsection{Product systems}
Suppose we have a family of Newtonian systems, $(Q_\mu,{\bf g}_\mu, {\rm F}_\mu)$, $\mu=1,\ldots, N$. If the force fields ${\rm F}_\mu$ depend only on the corresponding configuration manifold, that is ${\rm F}_\mu\in \vf(Q_\mu)$, then we have a family of systems of differential equations  for a curve $\gamma=(\gamma_1,\ldots,\gamma_N)$, decomposed into  $N$ non--coupled equations, one for every $\gamma_\mu$. 

But if the force fields depend on the manifold product,  ${\rm F}_\mu(q_1,\ldots,q_N)$ instead of ${\rm F}_\mu(q_\mu)$, then we have a coupled family of differential equations. In some places these systems are called in interaction. We can represent the situation as another Newtonian system $(Q,{\bf g},\omega)$ with the following elements as configuration manifold, Riemannian metric, work form and force field respectively:
$$
Q=\prod_{\mu=1}^NQ_\mu , \qquad 
 {\bf g}=\oplus_{\mu=1}^N{\bf g}_\mu\,  ,\qquad\omega=(\omega_1,\ldots ,\omega_N)\,,\qquad
{\rm F}=({\rm F}_1,\ldots ,{\rm F}_N) ,
$$
where in fact $\omega_\mu\in\Omega^1(Q_\mu, \pi_\mu)$, ${\rm F}_\mu\in\vf (Q_\mu,\pi_\mu)$,  being $\pi_\mu\colon \prod_{\nu=1}^N Q_\nu\to Q_\mu$ the natural projections. 

As we have a new Newtonian system, we can consider the case of constrained one, that is a holonomic constrained system: the system is obliged to move in a submanifold ${\bf S}\subset Q$. The constraint force is also decomposed as ${\rm R}=({\rm R}_1,\ldots,{\rm R}_N)$, where  
${\rm R}_\mu\in \vf (Q_\mu,\pi_\mu)$, $\mu=1,\ldots,N$. The dynamical equation has the same form:
$$
\nabla_{\dot\gamma}\dot\gamma={\rm F}\circ\gamma +{\rm R}\circ\dot\gamma \ ;
$$
for curves $\gamma\colon [a,b]\subset\Real\to S$, $\gamma=(\gamma_1,\ldots,\gamma_N)$, $\gamma_\mu\colon [a,b]\subset\Real\to Q_\mu$, or the corresponding constraint submanifold in the case of holonomic constraints.

\section{Nonholonomic constrains. Nonholonomic d'Alembert principle}
\protect\label{slnh}

As far as we know, the description of this kind of systems is not contained in the literature with this Riemannian approach, hence we develop them in detail. For a classical approach see for example \cite{GPS-01,Som-1952}. Other geometric approaches can be seen in \cite{CLMM-2002, GMM-2003,LEWIS1998,Lewis2020}. For an extended bibliography on this topic, for both classical and geometric approaches, see \cite{Leon-2017}. In \cite{Koiller-2019}, there is a dual standpoint using Cartan equivalence that can be directly written in our Riemannian approach.

\subsection{Nonholonomic constrained systems}

Let $(Q,{\bf g},\omega)$ be a Newtonian mechanical system, ${\rm F}\in\vf (Q)$ the force field. Let $C$ be a submanifold of $\Tan Q$, $j_C\colon C\hookrightarrow \Tan Q$ the natural embedding, and suppose that $\tau_Q(C)=Q$. In this situation $C$ is called a 
 {\bf submanifold of nonholonomic constraints}.
We want to describe the dynamics of the system when it is constrained to evolve in the submanifold $C$ of the phase space. The constrained system is given by $(Q,{\bf g},F,C)$. Notice that the system is not restricted in the configuration manifold, that is in the positions, but in the possible velocities to move with.

As in the holonomic case, to solve this problem we suppose that there exists a  {\bf constraint force} ${\rm R}$, usually depending on the velocities,  that is ${\rm R}\in\vf (Q,\tau_Q)$, which forces the system to move in $C$. This constraint force is unknown.
Then the Newton dynamical equation is given for curves
$\gamma\colon [a,b]\subset\Real\to Q$ such that satisfy
\begin{enumerate}
\item
	$\dot\gamma (t)\in C$, $ t\in[a,b]$.
\item
$\nabla_{\dot\gamma}\dot\gamma ={\rm F}\circ\gamma+{\rm R}\circ\dot\gamma$.
\end{enumerate}
And we need to state conditions allowing us to find the trajectories of the system and calculate ${\rm R}$\footnote{Arnold Sommerfeld, says that this force $\mathrm{R}$ is a ``geometric force'' versus ${\rm F}$ which is an ``applied force''. See \cite{Som-1952}.}.

In order to state the nonholonomic d'Alembert principle, we need some geometric preliminaries.
Let $(q,v)\in C$. The condition assumed on $C$, $\tau_Q (C)=Q$, tells us that the dimension of the subspace of ${\rm V}_{(q,v)}(\Tan Q)$ which is tangent to $C$, does not depend on the point $(q,v)$. Let
$$
\Tan_{(q,v)}^VC={\rm V}_{(q,v)}(\Tan Q)\cap\Tan_{(q,v)}C=\{ w\in{\rm V}_{(q,v)}(\Tan Q)\ ;\ w\in\Tan_{(q,v)}C\}
$$
be the vertical subspace tangent to $C$. This is a vector subbundle of $\Tan(\Tan Q)$ and we can write $\Tan^VC={\rm V}(\Tan Q)|_{C}\cap\Tan C$ as vector bundles over the manifold $C$.

For $(q,v)\in \Tan Q$, consider the vertical lifting from the point $q\in Q$ to $(q,v)$ given, as usual, by
\begin{align*}
\lambda_q^{(q,v)}\colon&\Tan_qQ\to{\rm V}_{(q,v)}(\Tan Q)\\
&\quad u_q\,\mapsto\quad\lambda_q^{(q,v)}(u_{q}):\phi\mapsto\lim_{t\rightarrow 0}\frac{\phi(q,v+tu)-\phi(q,v)}{t},
\end{align*}
that is, the directional derivative of $\phi\in\Cinfty (\Tan Q)$ along $u_{q}$ at the point $(q,v)\in\Tan Q$.
As $\lambda_q^{(q,v)}$ is an isomorphism from $\Tan_{q}Q$ to $V_{(q,v)}(\Tan Q)$, let $(\Tan_{(q,v)}^VC)_q$ the inverse image of
$\Tan_{(q,v)}^VC\subset V_{(q,v)}(\Tan Q)$ by $\lambda_q^{(q,v)}$. Then $\Tan_{q}Q=(\Tan_{(q,v)}^VC)_q\oplus(\Tan_{(q,v)}^VC)^\perp_q$, being this one an orthogonal decomposition with respect to ${\bf g}$.

Now we introduce the \textbf{Nonholonomic d'Alembert principle}:
The constraint force ${\rm R}\in\vf (Q,\tau_Q)$ satisfies that
$$
{\rm R}(q,v)\in(\Tan^V_{(q,v)}C)^\perp_q \ ,
$$
that is, ${\bf g}({\rm R}(q,v),w)=0$, for every $w\in (\Tan^V_{(q,v)}C)_q$. The constraint force in $(q,v)\in C$ is orthogonal to the subspace $(\Tan^V_{(q,v)}C)_q\subset\Tan_q Q$ of tangent vectors at $q$ whose vertical lifting is tangent to $C$.

In the classical physics literature, the elements in $(\Tan^V_{(q,v)}C)_q$  are called {\bf virtual velocities}. 

\bigskip
\noindent{\bf Comment}: If there are no constraints, that is $C=\Tan Q$, then for every $(q,v)\in C$ we have that $\Tan_{(q,v)}^VC=V_{(q,v)}(\Tan Q)$, hence $(\Tan^V_{(q,v)}C)_q=\Tan_{q}Q$ and $(\Tan^V_{(q,v)}C)^\perp_q=\{0\}$, that is ${\rm R}(q,v)=0$ and there is no constraint force.

\bigskip
This principle allows to obtain  the expression of the constraint force and the dynamical equations of the trajectories of the system as we will see in the next paragraphs. 

\subsection{Relation between the constraint force and the constraints}

In order to do this, we need to characterize the subspace $(\Tan^V_{(q,v)}C)_q$ in relation with the \textbf{constraints}, defined as  the functions vanishing on the submanifold $C$, that is functions $\phi:\Tan Q\to\Real$  such that $j_C^*\phi =0$.

\medskip
First, let $\phi\in\Cinfty (\Tan Q)$, and consider the 1-form $\d^V\phi\in\df^1(Q,\tau_Q)$ defined by
$$
(\d^V\phi(q,v))(u)=
(\d\phi\circ\lambda_q^{(q,v)})(u)=
\d\phi (\lambda_q^{(q,v)}(u))
 ,  \quad (q,v)\in\Tan Q, \quad u\in\Tan_qQ \ ;
$$
whose expression in a local natural chart $(q^i,v^i)$ of $\Tan Q$ is \(\dst\d^V\phi=\derpar{\phi}{v^i}\d q^i\). We have the following result:

\begin{prop}\label{verticalvectors}
Let $(q,v)\in C$.
\begin{enumerate}
\item If $w\in\Tan_qQ$, then  \(\dst w\in(\Tan_{(q,v)}^VC)_q\) if, and only if,
$(\d^V\phi(q,v))(w)=0$,
for every $\phi\in\Cinfty (\Tan Q)$ such that $j_C^*\phi =0$, that is for every constraint.
\item Let $((\Tan_{(q,v)}^VC)_q)^o=\{\alpha\in\Tan^{*}_{q}Q; \alpha(w)=0, \forall w\in(\Tan_{(q,v)}^VC)_q\}\subset\Tan^{*}_{q}Q$ be the annihilator of $(\Tan_{(q,v)}^VC)_q$, then $
((\Tan_{(q,v)}^VC)_q)^o=\{\d^V\phi(q,v);\forall  \phi\in\Cinfty(\Tan Q),   j_C^*\phi=0\}.
$
\item If $w\in\Tan_qQ$, then  \(\dst w\in(\Tan_{(q,v)}^VC)_q\) if, and only if, $\inn(w){\bf g} \in((\Tan_{(q,v)}^VC)_q)^o$.
\end{enumerate}
\end{prop}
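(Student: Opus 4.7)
My plan is to handle the three assertions in sequence, since (2) and (3) both build on (1). Everything rests on the standard local picture in which $C$ is cut out near $(q,v)$ by independent smooth functions, together with the fact that the vertical lift $\lambda_q^{(q,v)}$ is a linear isomorphism $\Tan_qQ\to {\rm V}_{(q,v)}(\Tan Q)$ intertwining $\d$ and $\d^V$ in the sense that $\d\phi(\lambda_q^{(q,v)}(w))=(\d^V\phi(q,v))(w)$, which is exactly how the text defines $\d^V\phi$.

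For (1), I start from the definition $(\Tan^V_{(q,v)}C)_q=(\lambda_q^{(q,v)})^{-1}(\Tan^V_{(q,v)}C)$, i.e.\ $w$ lies in this subspace exactly when $\lambda_q^{(q,v)}(w)\in\Tan_{(q,v)}C$. The usual characterization of the tangent space to a submanifold via its vanishing ideal says that $X\in\Tan_{(q,v)}C$ if and only if $X(\phi)=0$ for every $\phi\in\Cinfty(\Tan Q)$ with $j_C^*\phi=0$. Applying this to $X=\lambda_q^{(q,v)}(w)$ and using the intertwining property yields the stated equivalence immediately.

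For (2), the inclusion $\{\d^V\phi(q,v):j_C^*\phi=0\}\subseteq((\Tan^V_{(q,v)}C)_q)^o$ is an immediate consequence of (1). For the converse I would pick local defining functions $\phi_1,\ldots,\phi_k\in\Cinfty(\Tan Q)$ with $j_C^*\phi_\alpha=0$ and $\d\phi_\alpha|_{(q,v)}$ linearly independent, so that $\Tan_{(q,v)}C=\bigcap_\alpha\ker\d\phi_\alpha|_{(q,v)}$. Intersecting with the vertical subspace and translating through $\lambda_q^{(q,v)}$ by means of (1) gives $(\Tan^V_{(q,v)}C)_q=\bigcap_\alpha\ker\d^V\phi_\alpha(q,v)$, so its annihilator is the linear span of $\d^V\phi_1(q,v),\ldots,\d^V\phi_k(q,v)$, each of which already lives in the set on the right-hand side of (2). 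The delicate step, which I expect to be the main obstacle, is checking that an arbitrary constraint $\phi$ contributes nothing new to the annihilator beyond this span. I would handle it by the Hadamard-type local factorization $\phi=f^\alpha\phi_\alpha$ on a neighbourhood of $(q,v)$ in $\Tan Q$; since each $\phi_\alpha$ vanishes at $(q,v)$, the product rule for $\d^V$ collapses to $\d^V\phi(q,v)=f^\alpha(q,v)\,\d^V\phi_\alpha(q,v)$, placing $\d^V\phi(q,v)$ inside the required span.

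For (3), the characterization reduces to a pairing translation: $\inn(w){\bf g}\in((\Tan^V_{(q,v)}C)_q)^o$ means ${\bf g}(w,u)=0$ for every $u\in(\Tan^V_{(q,v)}C)_q$, which in view of the orthogonal decomposition $\Tan_qQ=(\Tan^V_{(q,v)}C)_q\oplus(\Tan^V_{(q,v)}C)^\perp_q$ set up immediately before the proposition characterizes membership on the left-hand side. Combined with (2), the immediate payoff is that a constraint force ${\rm R}(q,v)$ satisfies the nonholonomic d'Alembert principle if and only if $\inn({\rm R}(q,v)){\bf g}=\mu^\alpha\,\d^V\phi_\alpha(q,v)$ for some multipliers $\mu^\alpha$, which is the dual multiplier form that will be used later to solve for ${\rm R}$ along a trajectory. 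All the analytical work sits in the Hadamard step of (2); (1) and (3) are definition-unwinding.
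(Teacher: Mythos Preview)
Your treatment of (1) is exactly the paper's chain of equivalences, and your handling of (3) matches the paper's ``from the definitions''; note, incidentally, that the printed statement of (3) contains a typo---it should read $w\in(\Tan^V_{(q,v)}C)^\perp_q$, as the corollary immediately following it confirms---and your orthogonality argument proves precisely that corrected version.

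For (2) you take a longer route than the paper, which simply says ``direct consequence of the first.'' The short argument the paper has in mind is this: the set of constraints $\{\phi:j_C^*\phi=0\}$ is a linear subspace of $\Cinfty(\Tan Q)$ and $\phi\mapsto\d^V\phi(q,v)$ is linear, so $S=\{\d^V\phi(q,v):j_C^*\phi=0\}$ is already a linear subspace of $\Tan_q^*Q$; item (1) says that $(\Tan^V_{(q,v)}C)_q$ is the common zero set of $S$, and then finite-dimensional double annihilation gives $((\Tan^V_{(q,v)}C)_q)^o=S$. Your approach via local defining functions and Hadamard factorization is correct but overshoots: once you have shown that the annihilator equals $\mathrm{span}\{\d^V\phi_\alpha(q,v)\}$, that span already lies in $S$ (just take constant-coefficient combinations $c^\alpha\phi_\alpha$ as constraints), and $S$ lies in the annihilator by (1), so the chain closes without ever invoking the factorization $\phi=f^\alpha\phi_\alpha$. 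The Hadamard step proves the auxiliary fact that every $\d^V\phi(q,v)$ lies in the span of the chosen $\d^V\phi_\alpha(q,v)$, which is true and used later in the paper (Proposition~3), but it is not needed to establish (2) itself.
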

({\sl Proof\/})\quad
To prove the first item let $w\in\Tan_qQ$, then we have
\begin{eqnarray*}
&w\in(\Tan_{(q,v)}^VC)_q& \Leftrightarrow \\
&\lambda_q^{(q,v)}(w)\in\Tan_{(q,v)}^VC &\Leftrightarrow\\
\lambda_q^{(q,v)}(w)(\phi)=0, &\forall  \phi\in\Cinfty(\Tan Q),  \mathrm{with} \,\, j_C^*\phi=0
&\Leftrightarrow\\
\d\phi(\lambda_q^{(q,v)}(w))=0, &\forall  \phi\in\Cinfty(\Tan Q), \mathrm{with} \,\, j_C^*\phi=0 
&\Leftrightarrow \\
(\d^V\phi(q,v))(w)=0,&\forall  \phi\in\Cinfty(\Tan Q),  \mathrm{with} \,\, j_C^*\phi=0.&
\end{eqnarray*}
The second item is a direct consequence of the first and the third can be obtained from the definitions.
\qed
\begin{corol}
Let ${\rm R}\in\vf (Q,\tau_Q)$ and $(q,v)\in C$; then
${\rm R}(q,v)\in(\Tan^V_{(q,v)}C)^\perp_p$ if, and only if, 
$\inn({\rm R}(q,v)){\bf g}\in((\Tan_{(q,v)}^VC)_q)^o$.
\end{corol}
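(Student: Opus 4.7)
The plan is to treat this corollary as a direct dictionary entry between the ``vector-field'' and the ``one-form'' formulations of the nonholonomic d'Alembert principle, using the musical isomorphism $\flat$ of ${\bf g}$ introduced earlier in Section 2.1. No substantive argument beyond unfolding definitions is required; the result is really a pointwise linear-algebraic statement at $q\in Q$.

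First I would rewrite the left-hand side. By the definition of the orthogonal complement in $\Tan_q Q$ with respect to the Riemannian metric ${\bf g}$, the condition ${\rm R}(q,v)\in(\Tan^V_{(q,v)}C)^\perp_q$ is equivalent to saying that ${\bf g}({\rm R}(q,v),w)=0$ for every $w\in(\Tan_{(q,v)}^VC)_q$.

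Next I would rewrite the right-hand side. The $1$-form $\inn({\rm R}(q,v)){\bf g}$ is, by the defining property of $\flat$ recalled in the paper, precisely the covector satisfying $(\inn({\rm R}(q,v)){\bf g})(w)={\bf g}({\rm R}(q,v),w)$ for every $w\in\Tan_q Q$. Hence, by the definition of the annihilator given in item~2 of Proposition~\ref{verticalvectors}, the condition $\inn({\rm R}(q,v)){\bf g}\in((\Tan_{(q,v)}^VC)_q)^o$ is equivalent to ${\bf g}({\rm R}(q,v),w)=0$ for every $w\in(\Tan_{(q,v)}^VC)_q$. Comparing with the first step yields the biconditional.

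The only ``obstacle'' is purely bookkeeping: one must keep in mind that ${\rm R}\in\vf(Q,\tau_Q)$, so that ${\rm R}(q,v)\in\Tan_q Q$ even though the point at which it is evaluated lies in $C\subset\Tan Q$, and that the annihilator is taken pointwise at the very same base point $q$. This argument is structurally the same as the $\flat$--translation performed in item~3 of Proposition~\ref{verticalvectors} for vectors in the vertical subspace itself; here the same translation is applied to its orthogonal complement.
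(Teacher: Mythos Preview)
Your proof is correct and is exactly the natural argument implicit in the paper's presentation: the paper states this corollary without proof, treating it as an immediate consequence of the definitions and of Proposition~\ref{verticalvectors}, and your unfolding via the musical isomorphism $\flat$ is precisely that consequence made explicit.
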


Usually the submanifold $C$ is given by the annihilation of a finite family of constraints, functions defined in $\Tan Q$. We are going to characterize $((\Tan_{(q,v)}^VC)_q)^o$ using these constraints. 

\bigskip
From here to the end of this section, we suppose that the submanifold $C$ is defined by the vanishing of $r$ functions $\{\phi^i\}$, with $r<n=\dim\, Q$, satisfying the condition
$$
\mathrm{rank}\,\left(\derpar{\coor{\phi}{1}{r}}{\coor{v}{1}{n}}\right)=r\, .
$$
Then $\dim\, C=2n-r$ and we have:

\begin{prop}
Let $(q,v)\in C$, then
\begin{enumerate}
\item $\dim \Tan^V_{(q,v)}C=n-r$.
\item $(\Tan^V_{(q,v)}C)_{q}=\{w\in \Tan_{q}Q; (\d^V\phi^i(q,v))(w)=0, i=1,\ldots,r \}$.
\item The subspace $((\Tan_{(q,v)}^VC)_q)^o$ is generated by $\{\d^V\phi^1(q,v),\ldots,\d^V\phi^r(q,v)\}$. Or what is the same, if $\alpha\in\Tan_{q}^{*}Q$ satisfies $\alpha |_{(\Tan^V_{(q,v)}C)_{q}}=0$, then $\alpha$ is a linear combination of $\d^V\phi^1(q,v),\ldots,\d^V\phi^r(q,v)$. 
\end{enumerate}
\end{prop}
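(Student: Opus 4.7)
My plan is to prove the three items in order, treating the local coordinate formula $d^V\phi = \derpar{\phi}{v^j}\,\d q^j$ as the main computational tool and relying on Proposition 1 to convert between the intrinsic description of $(\Tan^V_{(q,v)}C)_q$ and coordinate conditions. I would first observe that the rank hypothesis can be restated covector-wise: since $d^V\phi^i(q,v) = \derpar{\phi^i}{v^j}(q,v)\,\d q^j$, the assumption $\mathrm{rank}(\partial\phi^i/\partial v^j) = r$ says precisely that $\d^V\phi^1(q,v),\ldots,\d^V\phi^r(q,v)$ are linearly independent covectors in $\Tan_q^*Q$. This one observation drives everything that follows.

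For item 1, I would write $\Tan^V_{(q,v)}C = \mathrm{V}_{(q,v)}(\Tan Q)\cap \Tan_{(q,v)}C$ and describe a vertical vector as $w = w^j\derpar{}{v^j}$. Such $w$ is tangent to $C$ precisely when $w(\phi^\alpha) = w^j\derpar{\phi^\alpha}{v^j} = 0$ for $\alpha = 1,\ldots,r$. The rank condition makes this a linear system of rank $r$ in $n$ unknowns, so the kernel has dimension $n-r$.

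For item 2, the inclusion $(\Tan^V_{(q,v)}C)_q \subseteq \{w : (\d^V\phi^i(q,v))(w) = 0\}$ follows immediately from the first part of Proposition 1, applied to each defining constraint $\phi^i$. The reverse inclusion is where I expect the main conceptual obstacle: a priori, Proposition 1 demands annihilation of every constraint, not just the chosen $r$ defining ones. The cleanest way to close the gap is dimension counting, using the linear independence of the $\d^V\phi^i(q,v)$ established above to see that the right-hand set has dimension $n-r$, matching item 1 and forcing equality. As a backup (and a more explicit alternative), I would appeal to the local representation $\phi = f_\alpha\phi^\alpha$ valid for any $\phi$ vanishing on $C$ by the rank condition and the implicit function theorem; then $\d^V\phi = f_\alpha\,\d^V\phi^\alpha + \phi^\alpha\,\d^V f_\alpha$, and evaluation at $(q,v)\in C$ kills the second sum, reducing annihilation of all constraints to annihilation of the $\phi^\alpha$.

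Item 3 is then a short dimension argument: the covectors $\d^V\phi^1(q,v),\ldots,\d^V\phi^r(q,v)$ lie in $((\Tan^V_{(q,v)}C)_q)^o$ by item 2, they are linearly independent by the rank hypothesis, and the annihilator has dimension $n-(n-r) = r$, so they form a basis and the claim about arbitrary $\alpha\in\Tan_q^*Q$ vanishing on $(\Tan^V_{(q,v)}C)_q$ being a combination of them is automatic.
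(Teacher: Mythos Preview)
Your proof is correct and for items 2 and 3 matches the paper almost verbatim: the paper also invokes item 1 of the earlier proposition for the inclusion in item 2 and closes by saying ``the equality is a matter of dimensions,'' and it dispatches item 3 in one line by concluding the $\d^V\phi^\alpha(q,v)$ form a basis of the annihilator.

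The only genuine methodological difference is in item 1. You argue directly: a vertical vector $w=w^j\partial/\partial v^j$ lies in $\Tan_{(q,v)}C$ iff the $r$ linear equations $w^j\,\partial\phi^\alpha/\partial v^j=0$ hold, and the rank hypothesis gives a kernel of dimension $n-r$. The paper instead reorders the velocity coordinates so that $\det(\partial\phi^\alpha/\partial v^\beta)_{\alpha,\beta\le r}\neq 0$, invokes the Inverse Function Theorem to produce the adapted chart $(q^1,\ldots,q^n,\phi^1,\ldots,\phi^r,v^{r+1},\ldots,v^n)$ on $\Tan Q$, and then reads off an explicit basis $\{\partial/\partial v^{r+1},\ldots,\partial/\partial v^n\}$ for $\Tan^V_{(q,v)}C$. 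Your route is shorter and purely linear-algebraic; the paper's route is a bit heavier but yields an explicit adapted frame, which can be convenient downstream. Your backup argument for item 2 via the factorization $\phi=f_\alpha\phi^\alpha$ is not in the paper and is unnecessary once the dimension count is in hand, but it is a valid alternative.
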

({\sl Proof\/})\quad
Let $(q^{i}, v^{i})$ a natural coordinate system on $TQ$.
\begin{enumerate}
\item The assumed condition
\(\dst {\rm rank}\,\left(\derpar{\coor{\phi}{1}{r}}{\coor{v}{1}{n}}\right)=r\) implies that, up to a change of order in the coordinates $\coor{q}{1}{n}$, we can suppose that
$$
\det\, \left(\derpar{\coor{\phi}{1}{r}}{\coor{v}{1}{r}}\right)\not= 0.
$$
Then $(q^{1},\ldots,q^{n},\phi^1,\ldots,\phi^r,v^{r+1},\ldots,v^{n})$ is a local coordinate system of $\Tan Q$ by the Inverse Function Theorem. The vector space $V_{q,v}(\Tan Q)$ is generated by
$$
\left\{\derpar{}{\phi^1},\ldots,\derpar{}{\phi^r},\derpar{}{v^{r+1}},\ldots,\derpar{}{v^n}\right\}_{(q,v)}\, ,
$$
and the subspace  $\Tan^V_{(q,v)}C\subset V_{q,v}(\Tan Q)$ is generated by
$$
\left\{\derpar{}{v^{r+1}},\ldots,\derpar{}{v^n}\right\}_{(q,v)}\, .
$$
Hence the first item is proved.

\item The inclusion part is proved in the first item of Proposition \ref{verticalvectors} and the equality is a matter of dimensions.
\item 
The previous items imply that $\{\d^V\phi^1(q,v),\ldots,\d^V\phi^r(q,v)\}$ is a basis of $
((\Tan_{(q,v)}^VC)_q)^o$. 
\end{enumerate}
\qed
Then, as a corollary we obtain:

\begin{prop}
For $(q,v)\in C$, the form $\eta\in\df^1(Q,\tau_Q)$ satisfies \(\dst j^*_C\eta\vert_{(\Tan^V_{(q,v)}C)_q}=0\)
if and only if there exist $\moment{\lambda}{1}{r}\in\Cinfty (\Tan Q)$ such that 
$\eta=\lambda_\alpha\d^V\phi^\alpha=\lambda_1\d^V\phi^1+\ldots+\lambda_r\d^V\phi^r$
\end{prop}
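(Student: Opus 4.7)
The plan is to prove the two implications separately, each resting on the structural description of the annihilator $((\Tan^V_{(q,v)}C)_q)^o$ given in the preceding proposition.

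The reverse direction $(\Leftarrow)$ is immediate. By item (3) of that proposition, at each $(q,v)\in C$ every covector $\d^V\phi^\alpha(q,v)$ lies in $((\Tan^V_{(q,v)}C)_q)^o$, so any smooth combination $\lambda_\alpha(q,v)\,\d^V\phi^\alpha(q,v)$ annihilates every virtual velocity at $(q,v)$; this is exactly the required vanishing of $j_C^*\eta$ on $(\Tan^V_{(q,v)}C)_q$.

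For the direct implication $(\Rightarrow)$ I would re-use the adapted chart constructed in the preceding proof: after reordering the $q^i$ one may assume $\det(\partial\phi^\alpha/\partial v^\beta)_{1\le\alpha,\beta\le r}\neq 0$ on an open neighborhood $U\subset\Tan Q$ of a chosen point of $C$, and $(q^1,\ldots,q^n,\phi^1,\ldots,\phi^r,v^{r+1},\ldots,v^n)$ becomes a chart on $U$. Writing $\eta=\eta_i\,\d q^i$ and $\d^V\phi^\alpha=(\partial\phi^\alpha/\partial v^i)\,\d q^i$, Cramer's rule applied to the first $r$ equations $\eta_\beta=\lambda_\alpha\,(\partial\phi^\alpha/\partial v^\beta)$, $\beta\le r$, produces smooth $\lambda_\alpha$ on all of $U$. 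To check $\eta-\lambda_\alpha\d^V\phi^\alpha=0$ on $C\cap U$ I would observe that on $U$ its first $r$ coordinate components vanish by construction, while on $C\cap U$ the hypothesis together with the $(\Leftarrow)$ direction forces the whole difference into $((\Tan^V_{(q,v)}C)_q)^o$; since this annihilator is spanned by $\{\d^V\phi^\alpha(q,v)\}$ and the $r\times r$ block $(\partial\phi^\alpha/\partial v^\beta)$ is non-singular, any such element whose first $r$ components vanish is itself zero.

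The remaining step, which I expect to be the only mildly delicate one, is to globalize the local $\lambda_\alpha$ to functions in $\Cinfty(\Tan Q)$. I would cover $C$ by adapted charts as above, patch the local coefficients by a smooth partition of unity subordinate to this cover, and extend the result arbitrarily and smoothly away from a neighborhood of $C$. Linearity of the identity $\eta=\lambda_\alpha\d^V\phi^\alpha$ in the $\lambda_\alpha$ guarantees that the patched coefficients still satisfy it on $C$; accordingly one must read the conclusion of the proposition as an equality of forms after pullback to $C$, not as a literal identity on all of $\Tan Q$, which would be too strong in general.
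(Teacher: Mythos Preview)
Your proposal is correct and rests on exactly the same idea as the paper's proof: the hypothesis says that $\eta(q,v)\in((\Tan^V_{(q,v)}C)_q)^o$ for every $(q,v)\in C$, and the preceding proposition identifies this annihilator as the span of the $\d^V\phi^\alpha(q,v)$. The paper's argument is literally that one sentence, while you supply the smoothness and globalization details (Cramer's rule in the adapted chart, partition of unity) that the paper leaves implicit; your closing remark that the identity $\eta=\lambda_\alpha\d^V\phi^\alpha$ must be read on $C$ rather than on all of $\Tan Q$ is a correct clarification that the paper does not make explicit.
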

({\sl Proof\/})\quad
Because for every $(q,v)\in C$, we have that $\eta(q,v)\in((\Tan_{(q,v)}^VC)_q)^o$.
\qed

\bigskip
In the case that the constraints define only locally the submanifold $C$, then the above results are valid only in the corresponding open set.

The last Proposition allows us to state the so called  
\rm{Dual d'Alembert nonholonomic principle}: 
{\it The work form $\inn({\rm R}){\bf g}$ corresponding to the constraint force ${\rm R}$ annihilates the virtual velocities of the system}.

And, as an immediate result, we have:

\begin{corol}
If ${\rm R}$ is the nonholonomic constraint force, then there exist
$\lambda_1,\ldots,\lambda_r\in\Cinfty (\Tan Q)$ such that
$$
\inn({\rm R}){\bf g}=\lambda_\alpha\d^V\phi^\alpha=\lambda_\alpha\derpar{\phi^\alpha}{v^j}\d q^j \ ,
$$
and, as a consequence,
$$
{\rm R}=\lambda_\alpha\derpar{\phi^\alpha}{v^j}g^{jk}\derpar{}{q^k} \ .
$$
\end{corol}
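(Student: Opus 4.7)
The statement is an essentially immediate consequence of the preceding material, so my plan is to stitch together the nonholonomic d'Alembert principle, the orthogonality/annihilator dictionary given by the earlier corollary, and the representation proposition for forms vanishing on $(\Tan^V_{(q,v)}C)_q$.

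First I would recall the nonholonomic d'Alembert principle: for each $(q,v)\in C$ the constraint force satisfies $\mathrm{R}(q,v)\in (\Tan^V_{(q,v)}C)^\perp_q$. Applying the corollary stated just after Proposition \ref{verticalvectors}, this orthogonality with respect to $\mathbf{g}$ translates into the statement $\inn(\mathrm{R}(q,v))\mathbf{g}\in ((\Tan_{(q,v)}^VC)_q)^o$ for every $(q,v)\in C$. In other words, the $1$-form $\eta:=\inn(\mathrm{R})\mathbf{g}\in\df^1(Q,\tau_Q)$ has the property that $j^*_C\eta$ annihilates $(\Tan^V_{(q,v)}C)_q$ at every point of $C$.

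Next I would invoke the last Proposition proved in this subsection, which is precisely the characterization we need: any such $\eta$ has the form $\eta=\lambda_\alpha\d^V\phi^\alpha$ for suitable $\lambda_1,\ldots,\lambda_r\in\Cinfty(\Tan Q)$. Substituting $\eta=\inn(\mathrm{R})\mathbf{g}$ yields the first claimed identity
$$
\inn(\mathrm{R})\mathbf{g}=\lambda_\alpha\d^V\phi^\alpha.
$$
The local form $\lambda_\alpha\,\partial\phi^\alpha/\partial v^j\,\d q^j$ then follows from the local expression $\d^V\phi^\alpha=(\partial\phi^\alpha/\partial v^j)\d q^j$ recorded just before Proposition \ref{verticalvectors}.

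Finally, to recover $\mathrm{R}$ itself rather than its associated work form, I would simply apply the musical isomorphism $\sharp:\Tan^*Q\to\Tan Q$ introduced in Section 2: since $\inn(\mathrm{R})\mathbf{g}=\flat(\mathrm{R})$, we have $\mathrm{R}=\sharp(\inn(\mathrm{R})\mathbf{g})$, and reading this off in the local chart gives
$$
\mathrm{R}=\lambda_\alpha\,\derpar{\phi^\alpha}{v^j}\,g^{jk}\,\derpar{}{q^k}.
$$
There is really no serious obstacle in this argument; the only point that requires a small comment is that the multipliers $\lambda_\alpha$ are a priori only defined on $C$, but the previous proposition guarantees they can be taken as elements of $\Cinfty(\Tan Q)$ (at least locally, wherever the constraints $\phi^\alpha$ define $C$), which is exactly what the statement asserts.
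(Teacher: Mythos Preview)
Your proof is correct and follows exactly the approach the paper intends: the paper states this corollary as ``an immediate result'' of the preceding proposition, and you have simply spelled out the chain --- d'Alembert principle, then the annihilator corollary after Proposition~\ref{verticalvectors}, then the representation proposition, then the musical isomorphism --- that makes it immediate. There is nothing to add or correct.
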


\begin{definition}
The functions $\lambda_1,\ldots,\lambda_r$ are called \textbf{Lagrange multipliers}
of the nonholonomic system.
\end{definition}

\noindent{\bf Comment}: Observe that in the case of holonomic constraints we could not obtain the global expression of the constraint force, we obtain the constraint form along any particular trajectory of the system, but in the present situation we can obtain one expression for ${\rm R}$ as a vector field depending on the velocities and on the Lagrange multipliers. This is because holonomic constraints are not a particular case of the nonholonomic ones. See also the comment following equation (\ref{constraintR}).

\bigskip

\noindent{\bf Important particular case}: The submanifold $C\subset\Tan Q$ is a linear subbundle of  $\Tan Q$. 
\begin{enumerate}
\item In this case $C$ is defined by the annihilation of a family of differential forms, that is, we have $\omega^{\alpha}\in\Omega^{1}(Q)$, $\alpha=1,\ldots,r$, linearly independent at every point of $Q$, and
$$
C=\{(q,v)\in\Tan Q;\, \omega^{\alpha}_{q}(v)=0,\alpha=1,\ldots,r\}\, .
$$
In local coordinates, if $\omega^{\alpha}=a^{\alpha}_{j}(q)\d q^{j}$, then $\phi^\alpha=a^{\alpha}_{j}(q)v^{j}$, that is the constraints are linear in the velocities, and the expression of the constraint force is
$$
{\rm R}=\lambda_\alpha a^\alpha_{j}g^{jk}\derpar{}{q^k} \ .
$$
\item Alternatively we can suppose that the subbundle $C$ is given as a regular distribution $\mathcal{D}$, the distribution annihilated by $\{\omega^{\alpha}, \alpha=1,\ldots,r\}$. If $(q,v)\in\mathcal{D}$, by linearity we have that  $\Tan_{(q,v)}^VC=\lambda_{q}^{(q,v)} (\mathcal{D}_{q})$, hence $(\Tan_{(q,v)}^VC)_{q}=\mathcal{D}_{q}$ and $(\Tan_{(q,v)}^VC)^\perp_q=\mathcal{D}^\perp_q$, then the constraint force ${\rm R}$ is orthogonal to $\mathcal{D}$.

This is the situation usually considered in the classical books on mechanics.

\item If the distribution $\mathcal{D}$ is integrable and $(q,v)\in\mathcal{D}$ is the initial condition of the dynamical equation for the solution $\gamma$, then the image of $\gamma$ is contained in the integral submanifold of $\mathcal{D}$ passing throught the point $q\in Q$, because $\dot\gamma(t)\in\mathcal{D}_{\gamma(t)}$ for every $t$. The constraint force ${\rm R}$, orthogonal to $\mathcal{D}$, obliges the system to move on the integral submanifolds of the constraint distribution $\mathcal{D}$.
\end{enumerate}

\bigskip
\noindent{\bf Comments}: 
\begin{enumerate}
\item We can understand the solution as follows: if we have $\phi:\Tan Q\to\Real$, an only constraint, there is an associated 1-form, $\d^V\phi$, which gives a ``constraint force'' $R^{\phi}$ such that $\inn(R^{\phi}){\bf g}$ is proportional to $\d^V\phi$.
If we have $r$ independent constraints $\{\phi^\alpha\}$, then we have the corresponding constraint forces, $R^{\phi^\alpha}$, and the subbundle generated by them, $\{R^{\phi^\alpha}\}$, and the resultant constraint force ${\rm R}$ is contained in this subbundle.
\item For these systems, d'Alembert principle says that if the system moves ``along the vertical fibres'' of C,  the work realised by the constraint force, the integral along the trajectory, is null. This is called the {\bf virtual work principle} as alternative to d'Alembert principle. 
\end{enumerate}

\subsection{Dynamical equations for nonholonomic systems}

We finish this section giving the expressions of the dynamical equations of nonholonomic constrained systems $(Q,{\bf g},\omega, C)$ in different significant cases of $C\subset \Tan Q$, the submanifold of nonholonomic constraints.
\begin{enumerate}
  \item 
If the submanifold of constraints $C$ is locally defined by the annihilation of
$r$  functions $\{\phi^\alpha\}$, and they are independent constraints,
then the dynamical equation is
$$
\nabla_{\dot\gamma}\dot\gamma ={\rm F}\circ\gamma+\lambda_\alpha\inn(\d^V\phi^\alpha)g^{-1}\circ\dot\gamma \ ,
$$
or, in dual form,
$$
\nabla_{\dot\gamma}(\inn(\dot\gamma){\bf g})=
\omega\circ\gamma+\lambda_\alpha\d^V\phi^\alpha\circ\dot\gamma \ .
$$
These equations together with the constraints defining $C$, $\phi^1=0,\ldots,\phi^r=0$,
are a system of $n+r$ equations with $n+r$ unknowns:
the components of the trajectory $\gamma$ and the Lagrange multipliers $\lambda_\alpha$. Observe that some of them are the dynamical equations and the remaining ones are the constraint functions.

The corresponding Euler-Lagrange equations are
$$
\frac{d}{d t}\left(\derpar{K}{v^j}\circ\dot\gamma\right)-
\derpar{K}{q^j}\circ\dot\gamma=
\omega_j\circ\gamma +\lambda_\alpha\derpar{\phi^\alpha}{v^j}\circ\dot\gamma
\quad , \quad (j=1,\ldots ,n)
$$
because $\omega=\omega_k\d q^k$, with $\omega_k=g_{kj}{\rm F}^j$.

\item
If the system is conservative, then $\omega=-\d V$,
where $V\in\Cinfty (Q)$ is the potential function. In this case we can introduce the Lagrangian function $\Lag=K-\tau_Q^*V$ and we have
$$
\frac{d}{d t}\left(\derpar{\Lag }{v^j}\circ\dot\gamma\right)-
\derpar{\Lag}{q^j}\circ\dot\gamma=
\lambda_\alpha\derpar{\phi^\alpha}{v^j}\circ\dot\gamma \ .
$$
As above, these equations, together with the constraints defining $C$, are also a system of $n+r$ equations with $n+r$ unknowns.

\item
If $C$ is a vector subbundle, then  $\phi^\alpha=a^{\alpha}_{j}(q)v^{j}$ and the Euler-Lagrange equations are
$$
\frac{d}{d t}\left(\derpar{K}{v^j}\circ\dot\gamma\right)-
\derpar{K}{q^j}\circ\dot\gamma=
\omega_j\circ\gamma +f_ka^k_j\circ\dot\gamma
\quad , \quad (j=1,\ldots ,n)\, .
$$
Or in the case of conservative systems
$$
\frac{d}{d t}\left(\derpar{\Lag }{v^j}\circ\dot\gamma\right)-
\derpar{\Lag}{q^j}\circ\dot\gamma=
\lambda_\alpha a^\alpha_j\circ\dot\gamma \ .
$$
\end{enumerate}
If $C$ is an affine subbundle, then  $\phi^\alpha=a^{\alpha}_{j}(q)v^{j}+b^\alpha(q)$ and the expression of the Euler-Lagrange dynamical equations is the same as above.

\section{Non-autonomous Newtonian systems}

In some interesting cases the force field acting on a Newtonian system 
depends not only on the positions and the velocities but also on time. They are called {\bf non-autonomous} or {\bf time-depending} systems.
In the following paragraphs we will try to extend the above geometric formulation to this situation. 

The geometric model appropriate to this case is the following: A \textbf{non-autonomous Newtonian mechanical system} is a triple
 $(\Real\times Q,{\bf g},{\rm F})$, where
$(Q,{\bf g})$ is a Riemannian manifold and the force field is 
${\rm F}\in\vf (Q,\pi_2)$, with $\pi_2\colon\Real\times Q\to Q$; that is,

$$
\xymatrix{&\Tan Q\ar[d]^{\tau_Q}\\{\Real\times Q}\ar[ur]^{{\rm F}}\ar[r]^{\quad\pi_2}&\, Q\,.}
$$

Moreover, if the force field depends on the velocities, then 
${\rm F}\in\vf (Q,\tau_Q\circ\rho_2)$, with $\rho_2\colon\Real\times\Tan Q\to \Tan Q$;
that is,
$$
\xymatrix{& &\Tan Q\ar[d]^{\tau_Q}\\{\Real\times \Tan Q}\ar[urr]^{{\rm F}}\ar[r]^{\quad\rho_2}&\Tan Q\ar[r]^{\tau_Q}&\, Q\,.}
$$

The Newton equations are written in the usual way:
\bit
\item
In the case that the force does not depend on the velocities
$$
\nabla_{\dot\gamma}\dot\gamma ={\rm F}\circ\bar\gamma
$$
where $\bar\gamma=(t,\gamma)\colon I\subset\Real\to I\times Q$.
We can also use the dual form by means of the corresponding work form $\omega\in\df^1(Q,\pi_2)$.
\item
If the force field depends on the velocities
$$
\nabla_{\dot\gamma}\dot\gamma ={\rm F}\circ\bar{\dot\gamma} \ ,
$$
where $\bar{\dot\gamma}=(t,\dot\gamma)\colon I\subset\Real\to I\times\Tan Q$.
As above we can use the corresponding work form $\omega\in\df^1(Q,\tau_Q\circ\rho_2)$ and obtain the equations in the dual form.
\eit

The  Euler-Lagrange equations are the same as usual but the second term depends on time $t\in\Real$.
In particular, if the work form depends on time, $\omega\in\df^1(Q,\pi_2)$,
we say that the system is \textsl{conservative} if there exists
$V\colon\Real\times Q\to \Real$, such that $\omega=-\d V_t$,
where $V_t\colon Q\to Q$ is defined by $V_t(p):=V(t,p)$,
for every $p\in Q$ and $t\in\Real$.
In this situation we can define the Lagrangian function $\Lag=K-V$, depending on time, and the Euler-Lagrange equation are as usual
$$
\frac{d}{d t}\left(\derpar{\Lag}{v^i}\circ\bar{\dot\gamma}\right)-
\derpar{\Lag}{q^i}\circ\bar{\dot\gamma}= 0 \ .
$$

The case of time depending constrained systems, both holonomic and nonholonomic, can be directly formulated with the adequate changes. The constraint force also depends on time.

It is interesting to note that if the Lagrangian function is time-depending, then the system is not conservative, that is, the energy function is not conserved along the trajectories of the system. In fact it is easy to show that
$$
\frac{dE\circ \dot\gamma}{dt}=-\frac{\partial L}{\partial t}\circ \dot\gamma\, .
$$
using the definition of the total energy from the Lagrangian, $E=K+V$.

Observe that, in the above comments we have supposed that the dependency on the time is in the forces, but there are systems where it is in the kinetic energy, that is in the Riemannian metric. This is the case of variable mass systems, like a rocket whose mass changes while the combustion goes; see \cite{Fox-67}  and \cite{Cve-16} for other interesting examples.  There are also constrained systems whose constraints depend on time, see for example \cite{Ar-89}.

Other different applications in mechanics of time depending Riemanian metrics can be seen in \cite{SarPrin-2010,MesSarCram-2011,SarPrinMes-kup-2012} and references therein. Some problems in geometry, mechanics and relativity consider the case of Riemannian metrics depending on parameters, the time for example when we have an evolution problem, but they are out of the scope of this survey.

\section{Geodesic fields, Hamilton-Jacobi equation and applications}

Hamilton--Jacobi equation is a fundamental tool to obtain significant results in the study of Lagrangian and Hamiltonian systems, but for a Newtonian system we have not, in general, this tool. Following ideas contained in \cite{CGMMR-06}, we develop in this section the associated notion, which we call Hamilton--Jacobi vector fields. They can be associated with non conservative forces, that is when we haven't a Lagrangian or Hamiltonian function. We compare both situation, the classical and the Newtonian, and give some particular applications in different fields.

\subsection{Hamilton--Jacobi vector fields}

 Consider a Newtonian system $(Q,{\bf g},{\rm F})$ and the associated dynamical equation for curves
 $\gamma:I\subseteq \mathbf{R}\longrightarrow Q$
 \begin{equation}\label{DIN}
    \nabla_{\dot{\gamma}(t)}\dot{\gamma}(t)={\rm F}(\gamma(t))\,.
\end{equation}

This is a second order differential equation for the curves $\gamma$, hence it is associated to a second order vector field in the phase space $\Tan Q$. Following ideas of Jacobi, we try to reduce the order of this differential equation and we will obtain the Hamilton--Jacobi equation for Newtonian systems in this Riemannian approach. For a similar approach in the Lagrangian setting, see \cite{CGMMR-06}. In \cite{BLMDMM-2012} you can see a more general approach in the atmosphere of skew symmetric algebroids.

First we begin with a geometric result on vector fields on the configuration space $Q$.

\begin{teor}
The vector field $X\in\mathfrak{X}(Q)$ satisfies the equation
\begin{equation}\label{HJL}
  \nabla_X X={\rm F}
  \end{equation}
if and only if its integral curves $\gamma:I\rightarrow Q$ are trajectories of the above Newtonian system, that is they satisfy equation (\ref{DIN}).

We say that $X$ is a \textbf{Hamilton-Jacobi vector field} associated to the Newtonian system $(Q,{\bf g},{\rm F})$.
\end{teor}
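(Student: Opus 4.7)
The proof is essentially a direct unwinding of definitions, relying on the pointwise character of the Levi--Civita connection. My plan is to show that for any integral curve $\gamma$ of $X$, the acceleration $\nabla_{\dot\gamma}\dot\gamma$ agrees with $(\nabla_X X)\circ\gamma$, and then to read both implications off this identity.

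The key preliminary step is to observe that an integral curve $\gamma\colon I\to Q$ of $X$ satisfies, by definition, $\dot\gamma(t)=X(\gamma(t))$ for all $t\in I$, i.e.\ $\dot\gamma = X\circ\gamma$ as a vector field along $\gamma$. Then I invoke the standard property of the induced covariant derivative along a curve: for any $Y\in\mathfrak{X}(Q)$ and any $t\in I$,
\begin{equation*}
\nabla_{\dot\gamma(t)}(Y\circ\gamma) \;=\; (\nabla_{\dot\gamma(t)}Y) \;=\; (\nabla_{X(\gamma(t))}Y) \;=\; (\nabla_X Y)(\gamma(t)),
\end{equation*}
since the value of $\nabla_{\,\cdot\,}Y$ at a point depends only on the tangent vector inserted. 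Applying this with $Y=X$ and using $\dot\gamma = X\circ\gamma$ yields
\begin{equation*}
\nabla_{\dot\gamma}\dot\gamma \;=\; \nabla_{\dot\gamma}(X\circ\gamma) \;=\; (\nabla_X X)\circ\gamma.
\end{equation*}

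From this identity both implications drop out. For the forward direction, if $\nabla_X X = \mathrm{F}$, then $\nabla_{\dot\gamma}\dot\gamma = (\nabla_X X)\circ\gamma = \mathrm{F}\circ\gamma$, so every integral curve solves the Newton equation \eqref{DIN}. For the converse, assume every integral curve $\gamma$ of $X$ satisfies \eqref{DIN}. Given an arbitrary point $q\in Q$, take the integral curve $\gamma$ of $X$ with $\gamma(0)=q$ (which exists at least locally by the ODE existence theorem); applying the identity at $t=0$ gives
\begin{equation*}
(\nabla_X X)(q) \;=\; \bigl(\nabla_{\dot\gamma}\dot\gamma\bigr)(0) \;=\; \mathrm{F}(\gamma(0)) \;=\; \mathrm{F}(q).
\end{equation*}
Since $q$ was arbitrary, $\nabla_X X = \mathrm{F}$ on all of $Q$.

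There is really no serious obstacle here; the only point that deserves care is the tensorial identity $\nabla_{\dot\gamma}(X\circ\gamma) = (\nabla_X X)\circ\gamma$ when $\dot\gamma = X\circ\gamma$, which uses the fact that $\nabla_v Y$ at a point depends only on $v$ and on $Y$ in a neighbourhood (not on an extension of $v$). Everything else is bookkeeping, and the argument makes no use of the Riemannian character of $\nabla$ beyond its being a linear connection, so no additional structure is needed.
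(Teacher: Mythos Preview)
Your proof is correct and follows essentially the same approach as the paper: both directions are read off the identity $\nabla_{\dot\gamma}\dot\gamma=(\nabla_X X)\circ\gamma$ for integral curves of $X$, with the converse handled by evaluating at $t=0$ for the integral curve through an arbitrary point. If anything, you are more explicit than the paper in justifying that identity via the pointwise dependence of $\nabla_v Y$ on $v$.
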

\begin{proof}

For any $p\in Q$, let $\gamma$ be the integral curve of $X$, $\dot{\gamma}=X\circ\gamma$, with initial condition $\gamma(0)=p$. Suppose that $\gamma$ satisfies (\ref{DIN}). Then at $t=0$ we have:
\begin{equation}
    \nabla_{\dot{\gamma}(0)}\dot{\gamma}(t)={\rm F}(\gamma(0))
\end{equation}
that is
\begin{equation}
    \nabla_{X(p)}X={\rm F}(p)\,.
\end{equation}
But the point $p\in Q$ is arbitrary, hence $\nabla_{X}X={\rm F}$ as we wanted.

On the other side, if $X$ satisfies $\nabla_{X}X={\rm F}$ and $\gamma$ is an integral curve of $X$, then
$$
 \nabla_{\dot{\gamma}(t)}\dot{\gamma}(t)= (\nabla_{X}X)(\gamma(t))={\rm F}(\gamma(t))\,,
$$
and the curve $\gamma$ satisfies equation (\ref{DIN}).
\end{proof}\qed

\bigskip
\noindent{\bf Comments}: 
\begin{enumerate}
  \item 
Notice that by this result, we obtain solutions of a second order differential equation, that is integral curves of a vector field in $\Tan Q$, as integral curves of vector fields in the manifold $Q$, any vector field solution to the equation $\nabla_X X={\rm F}$, that is first order differential equations. But, given a solution $X\in\mathfrak{X}(Q)$, we do not obtain from $X$ all the integral curves of the second order equation, we only obtain those with initial conditions  $(q,u)\in\Tan Q$ of the form $(q,X(q))$. 

In fact, for every solution $X$, we obtain a family of curves solution to the dynamical equation (\ref{DIN}): they are those solution curves contained in the submanifold of $\Tan Q$ defined by the graph of $X$ as a section of the natural projection $\tau_Q:\Tan Q\rightarrow Q$. It can be proved that this submanifold is invariant by the second order differential system associated to the dynamical equation. See \cite{CGMMR-06} for more details.
\item
If ${\rm F}=0$, we have the family of vector fields satisfying the equation $\nabla_X X=0$, the so called {\bf geodesic vector fields}. Their integral curves are geodesic curves for the Levi--Civita connection $\nabla$, they are solutions to the equation $\nabla_{\dot{\gamma}(t)}\dot{\gamma}(t)=0$. Some interesting properties of these vector field are in \cite{CMM--2021} and references therein.

\end{enumerate}

\subsection{Classical Hamilton-Jacobi equation in Lagrangian form}

Let $(Q,{\bf g},\omega=-\d V)$ be a conservative Newtonian system with force field
${\rm F}=-\mathrm{grad}\,V$. The Lagrangian function is $L=K-V$ and the total energy function is $E=K+V$.
Suppose the vector field $X$ is a Hamilton--Jacobi vector field of the system, $\nabla_X X={\rm F}$, then
$$
\mathcal{L}_X (E\circ X)=\mathcal{L}_X\left(\frac{1}{2}\textbf{g}(X,X)+V\right)=
$$
$$
=\textbf{g}(\nabla_X X,X)+\mathrm{d}V(X)= \textbf{g}(F,X)+\mathrm{d}V(X)=0
$$
which is a conservation of energy theorem for the Hamilton--Jacobi vector fields.

And following this last result we have:
\bigskip
\begin{teor}
(Hamilton--Jacobi equation)

Suppose that the vector field $X\in\mathfrak{X}(Q)$ satisfyes the condition 
$\mathrm{d}(\inn(X) \textbf{g})=0$, for example if $X$ has a potential function (see the comments below). Then the following conditions for $X$ are equivalent:
\begin{enumerate}
  \item $\nabla_X X=F=-\mathrm{grad}\,V$.
  \item $\mathrm{d}(E\circ X)=0$.
\end{enumerate}
\end{teor}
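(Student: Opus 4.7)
The plan is to express $\mathrm{d}(E\circ X)$ explicitly as the work form associated to $\nabla_X X-{\rm F}$; the equivalence then follows from nondegeneracy of $\mathbf{g}$.

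First, I would evaluate $\mathrm{d}(E\circ X)$ on an arbitrary $Y\in\mathfrak{X}(Q)$. Since $E\circ X=\tfrac{1}{2}\mathbf{g}(X,X)+V$, the Riemannian property of $\nabla$ gives
$$Y(E\circ X)=\mathbf{g}(\nabla_Y X,X)+Y(V)=\mathbf{g}(\nabla_Y X,X)+\mathrm{d}V(Y).$$
Using $F=-\mathrm{grad}\,V$, that is $\mathrm{d}V(Y)=-\mathbf{g}(F,Y)$, the right hand side becomes $\mathbf{g}(\nabla_Y X,X)-\mathbf{g}(F,Y)$. To get the desired form I need to replace $\mathbf{g}(\nabla_Y X,X)$ by $\mathbf{g}(\nabla_X X,Y)$.

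The main point — and the only place where the hypothesis $\mathrm{d}(\iota(X)\mathbf{g})=0$ is used — is precisely this swap. Setting $\alpha:=\iota(X)\mathbf{g}$, I would compute $\mathrm{d}\alpha(Y,Z)=Y(\mathbf{g}(X,Z))-Z(\mathbf{g}(X,Y))-\mathbf{g}(X,[Y,Z])$, expand each Lie derivative of the metric via the Riemannian property, and eliminate the bracket term using the vanishing torsion $\nabla_Y Z-\nabla_Z Y=[Y,Z]$. After cancellation one is left with
$$\mathrm{d}\alpha(Y,Z)=\mathbf{g}(\nabla_Y X,Z)-\mathbf{g}(\nabla_Z X,Y).$$
Hence the hypothesis $\mathrm{d}\alpha=0$ is equivalent to the symmetry $\mathbf{g}(\nabla_Y X,Z)=\mathbf{g}(\nabla_Z X,Y)$ for all $Y,Z$. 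Specialising to $Z=X$ yields $\mathbf{g}(\nabla_Y X,X)=\mathbf{g}(\nabla_X X,Y)$.

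Substituting back into the first step,
$$\mathrm{d}(E\circ X)(Y)=\mathbf{g}(\nabla_X X,Y)-\mathbf{g}(F,Y)=\mathbf{g}(\nabla_X X-F,Y),$$
for every $Y\in\mathfrak{X}(Q)$. Since $\mathbf{g}$ is nondegenerate, this identity shows at once that $\mathrm{d}(E\circ X)=0$ if and only if $\nabla_X X=F$, proving both implications simultaneously. The only real obstacle is the derivation of the symmetry identity from the closedness of $\iota(X)\mathbf{g}$; once that lemma is in hand the rest is a one-line computation, and it clarifies why the hypothesis on $\iota(X)\mathbf{g}$ is the natural Riemannian analogue of the existence of a potential (a local primitive $S$ with $X=\mathrm{grad}\,S$ automatically satisfies it, since $\iota(X)\mathbf{g}=\mathrm{d}S$).
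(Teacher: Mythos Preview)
Your proof is correct and follows essentially the same route as the paper: evaluate $\mathrm{d}(E\circ X)$ on an arbitrary $Y$, use the key identity $\mathrm{d}(\iota(X)\mathbf{g})(Y,Z)=\mathbf{g}(\nabla_Y X,Z)-\mathbf{g}(\nabla_Z X,Y)$ (derived from the Cartan formula for $\mathrm{d}$ together with the torsion-free and metric properties of $\nabla$) to swap $\mathbf{g}(\nabla_Y X,X)$ for $\mathbf{g}(\nabla_X X,Y)$, and conclude by nondegeneracy of $\mathbf{g}$. Your write-up is in fact more explicit than the paper's, which merely states the identity and asserts it follows from those two properties.
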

\begin{proof}

Let $Y$ be a vector field on $Q$, then:
\vspace{-2mm}
\begin{eqnarray*}
\mathrm{d}(E\circ X)(Y)&=&\mathcal{L}_Y(E\circ X)=\mathcal{L}_Y\left(\frac{1}{2}(\textbf{g}(X,X)+V\right)=\textbf{g}(\nabla_Y X,X)+\mathrm{d}V(Y)\\
 &=&\textbf{g}(\nabla_X X,Y)+\mathrm{d}V(Y)=\textbf{g}(\nabla_X X-F,Y)\,,
\end{eqnarray*}
where the fourth identity is consequence of
$$
\mathrm{d}(i_X \textbf{g})(Y,Z)=\textbf{g}(\nabla_Y X,Z)-\textbf{g}(\nabla_Z X,Y)
$$
directly obtained from the definition of the exterior differential and the symmetry of the connection $\nabla$.

But $Y$ is arbitrary, hence we have the equivalence.
\end{proof}
\qed

\bigskip
\noindent{\bf Comments}: 
\begin{enumerate}
\item Notice that the condition $\mathrm{d}(E\circ X)=0$ is equivalent to say that $E\circ X$ is a constant in every connected component of $Q$, that is
$$E(q^1,\ldots,q^n, X^1,\ldots,X^n)=\mathrm{constant}\,,$$
which is no more that the Hamilton--Jacobi equation but in Lagrangian form.
Its solutions are the vector fields $X\in\mathfrak{X}(Q)$ satisfying $\nabla_X X={\rm F}$ and $\mathrm{d}(\inn(X) \textbf{g})=0$. 

\item If we want to obtain the classical form of the Hamilton--Jacobi equation, that is the Hamiltonian form, we need to construct the Hamiltonian function, $H:\Tan^*Q\to\Real$, and change the vector field $X$ by the closed differential form $\alpha=\inn(X) \textbf{g}$. Then, locally, $\alpha=\d S$ and we obtain the classical equation. The closedness of the form is related to the special condition we have imposed to the vector field $X$ and it is equivalent to say that the image of the form $\alpha$ in $\Tan^*Q$ is a Lagrangian submanifold of $\Tan^*Q$ with its natural symplectic structure (see \cite{CGMMR-06}).

\item Condition $\mathrm{d}(i_X \textbf{g})=0$ for the vector field $X$ is equivalent to say that the image of the map $X:Q\to\mathrm{T}Q$ is a Lagrangian submanifold of the symplectic manifold $\mathrm{T}Q$ with the 2-Lagrangian form associated to $L=K-V$, which is a regular Lagrangian. It is related with the previous item, as $\alpha=i_X \textbf{g}$, and the Legendre transformation associated to the kinetic energy. Once again, see \cite{CGMMR-06} for more details.
\end{enumerate}

\subsection{Jacobi metric}

Consider a conservative mechanical system defined on the Riemannian manifold $(Q,{\bf g})$
with potential energy function $V \colon Q \to \Real$.
Recall that the energy is given by
$E(v_q) = \frac12 {\bf g}(v_q,v_q) + V(q)$
and it is a constant of the motion.

Suppose $E_0 > V(q)$ for all $q \in Q$.
We define the {\bf Jacobi metric} as
$$
{\bf g}_0 = (E_0-V) g .
$$
It is well known that the solutions $\gamma$ of the Newton equation
$\nabla_{\dot\gamma} \dot\gamma = -\mathrm{grad}\, V \circ \gamma$
with fixed energy $E_0$ are,
under convenient reparametrization,
the geodesic lines of~${\bf g}_0$. In \cite{God-69} there are interesting comments on this topic, and you can see a nice new approach of the same question in \cite{CMM--2021}.

\subsection{Applications}
\subsubsection{Euler equation for fluids as a Hamilton--Jacobi equation of a Newtonian system}

In this paragraph we interpret the Euler equation for stationary fluids as a Hamilton--Jacobi equation for a Newtonian system.

Let $U\subset\mathbf{R}^3$ an open set. The classical Euler equation for a time--depending vector field $X\in\mathfrak{X}(U)$, in Cartesian coordinates is given as
$$
\frac{\partial X}{\partial t}+<X,\nabla>X=-\nabla p
$$
where $p:U\times\mathbf{R}\to\mathbf{R}$ is a function, the pressure on the fluid.

If we consider the case where the fluid is incompressible, then we include  the condition $\mathrm{div}\,X=0$, that is the vector field is conservative. For a point $(x,t)$, the tangent vector $X(x,t)\in\Tan_x U$ is the velocity of the fluid particle in the point $x$ at the moment $t$.

In a Riemannian manifold $(M,\textbf{g})$, the corresponding Euler equation for a time--depending vector field $X\in\mathfrak{X}(M)$ is written as:
$$
\frac{\partial X}{\partial t}+\nabla_X X=-\mathrm{grad}\, p\, .
$$
The condition of being conservative for $X$ is $L_X \mathrm{d}V=0$, where $\mathrm{d}V$ is the volume element associated to $\textbf{g}$. The equivalence of both equations in direct in local coordinates.

For a stationary motion, that is constant in time, the last equation is:
$$
\nabla_X X=-\mathrm{grad}\, p
$$
that is, the Lagrangian Hamilton--Jacobi equation (\ref{HJL}) with ${\rm F}=-\mathrm{grad}\, p$, corresponding to the conservative Newtonian system $(M,{\bf g},\omega=-\d p)$.

\subsubsection{Hamilton--Jacobi equation and Schr\"odinger equation.
}
Given a Newtonian conservative system $(M,{\bf g},\omega=-\d V)$ and a vector field $X\in\mathfrak{X}(M)$ which is a gradient, $X=-\mathrm{grad}\,S$, consider the following three conditions:
\begin{enumerate}
  \item $X$ is a Hamilton--Jacobi vector field for the given system, that is
$\nabla_X X=-\mathrm{grad}\, V$ or equivalently $E\circ \mathrm{grad}\,S=E_o=\mathrm{constant}$.
\item $\mathrm{div} X=0$, hence $\mathrm{div}\, \mathrm{grad}\,S=\Delta S=0$, where $\Delta$ is the Laplacian operator for the Riemannian metric ${\bf g}$.
\item The function $\Psi= \exp(iS)$ satisfies the Schr\"odinger equation
$$
\left(-\frac{1}{2}\Delta+V\right)\Psi=E_o\Psi\, .
$$
\end{enumerate}
Then two of them imply the other one.

This is stated in \cite{ALOMU-2014} and is a nice relation between classical and quantum worlds, specially highlighted with this Riemannian approach to mechanics because all the elements included in the statement have a clear geometric meaning. For more explanations of these relations and interesting ideas see \cite{CMMGM-2016,MARMO-2009}. See also \cite{CHERO2001} for a Riemannian approach to quantum mechanics. We only include the first paper of a series of four.

\section{Other interesting topics}

In this section we give some references about topics where this Riemannian approach has given a modern revival of results and applications. We limite ourselves to give some information on some significant authors and recommend some of their publications and web pages as a source of information and update on these topics in the Riemannian approach. In any case, it is necessary to say that this is not a full list of groups or researchers which have contributed to the study of these problems with this approach.

\subsection{Symmetry and conserved quantities}

Symmetries of the system, associated conserved quantities and reduction by the action of symmetry groups are deep ideas in the study of mechanical systems from the very beginning. In our approach, the Riemannian structure gives a natural set of symmetries, the isometries of the metric and, infinitesimally, the Killing vector fields. For example, in the case that the configuration manifold is $\Real^3$, invariance by translations and rotations give rise to linear and angular momenta in the classic terminology.

We do not enter in historical references on  a so extended subject, which from a geometric viewpoint goes back to S. Lie in nineteenth century, with well recognized groups working on different topics and with a large amount of significant deep results. As an example we cite the work of G. Marmo and collaborators making geometry and symmetry fundamental tools to understand physical problems. See \cite{CIMM-2015} and the references therein as a book collecting their work along the years. Other well known books on the topic have been written by  J. Marsden and coauthors or A. Bloch and others authors. See, e.g. \cite{BLOCH2015} and the personal web pages of the authors, the web page of J. Marsden, specially maintained since he passed away in 2010, where most of his books and research are open accessible and are a source of information and results on this topic. These given references contain extended lists of names and topics related to the use of symmetry to understand problems in mechanics and other problems.

Hence we only give an example of a general result, a kind of {\bf Noether theorem} for the situation under study, that is mechanics with the Riemannian approach. 

\medskip
Suppose we have a Newtonian system $(M,{\bf g},{\rm F})$. For a vector field $X\in\vf(M)$, consider the function ${\bf I}_X=\widehat{\inn(X){\bf g}}:\Tan M\to\Real$  defined by ${\bf I}_X(v_q)={\bf g}(X_q,v_q)$. Then we have
\begin{enumerate}
\item If $X$ is a Killing vector field, that is $\mathcal{L}_X{\bf g}=0$, and $\sigma:I\subset\Real\to M$ is a geodesic line, that is $\nabla_{\dot\sigma}{\dot\sigma}=0$, then ${\bf I}_X$ is constant along $\dot\sigma$, that is 
\begin{equation}
\frac{\d}{\d t}({\bf I}_X\circ\dot\sigma)(t)=0\,.
\end{equation}
In other words, if $X$ is a Killing vector field, then ${\bf I}_X$ is a conserved quantity along the geodesic lines.
\item If $X$ is a Killing vector field satisfying ${\bf g}(X,{\rm F})=0$, and $\gamma:I\subset\Real\to M$ is a trajectory of the system $(M,{\bf g},{\rm F})$, that is $\nabla_{\dot\gamma}{\dot\gamma}={\rm F}\circ\gamma$, then ${\bf I}_X$ is constant along $\dot\gamma$, that is 
\begin{equation}
\frac{\d}{\d t}({\bf I}_X\circ\dot\gamma)(t)=0\,.
\end{equation}
In other words, if $X$ is a Killing vector field orthogonal to the force field ${\rm F}$, then ${\bf I}_X$ is a conserved quantity along the geodesic lines.
\end{enumerate}

Both results are consequence of the following equation
\begin{equation}
\frac{\d}{\d t}\left({\bf g}(X(\gamma(t),\dot\gamma(t))\right)=\nabla_{\dot\gamma}({\bf g}(X,\dot\gamma))={\bf g}(\nabla_{\dot\gamma}X,\dot\gamma)(t)+{\bf g}(X,\nabla_{\dot\gamma}\dot\gamma)(t)\, ,
\end{equation}
for every curve $\gamma$ and the well known property that $X$ is a Killing vector field if and only if 
${\bf g}(\nabla_YX,Y)=0$ for every $Y\in\vf(M)$ as can be seen from the following chain of identities:
\begin{eqnarray*}
  {\bf g}(\nabla_YX,Y)&=&{\bf g}(\nabla_XY+\mathcal{L}_YX,Y)={\bf g}(\nabla_XY,Y) -{\bf g}(\mathcal{L}_XY,Y)\\
  &=&\frac{1}{2}\mathcal{L}_X({\bf g}(Y,Y))-\left(\frac{1}{2}\mathcal{L}_X({\bf g}(Y,Y))-\frac{1}{2}(\mathcal{L}_X{\bf g})(Y,Y)\right)\\
  &=&\frac{1}{2}(\mathcal{L}_X{\bf g})(Y,Y)\, ,
\end{eqnarray*}
obtained using the properties of the Levi--Civita connection.

\medskip
\noindent{\bf Comment}: 
Which is the origin of the condition ${\bf g}(X,{\rm F})=0$ in item 2? Consider the case where the Newtonian system is conservative, then ${\rm F}=-{\rm grad}\, V$, hence the above orthogonality condition is equivalent to say that $\mathcal{L}_X V=0$, and this last condition, together with $X$ being Killing, implies that the Lagrangian $L=T-V$ is invariant by $X$, then the above result is a kind of generalization of the classical Noether theorem for Lagrangian systems.

\subsection{Ideas on control of mechanical systems}

Control of mechanical systems and robotics is a broad field of study both from the theoretical and the applied viewpoint. The geometric approach has a wide development at least from the eighties in the last century. Lagrangian, Hamiltonian and Riemannian approaches are used depending on the taste of the different authors and the closeness to the applications. In particular, Riemannian treatment is specially used in robotics. 

The control systems are modelled as a Newtonian system $(M,{\bf g},{\rm F})$ together with a set of control vector fields, or input forces, $F^1,\ldots,F^k$, we can modulate with some coefficients. The control equation is given as:
$$
\nabla_{\dot\gamma}{\dot\gamma}={\rm F}\circ\gamma+\sum_{i=1}^k u_i F^i\,.
$$
The coefficients $u_i$ are the input controls and, usually, are functions of time belonging to a specific set of functions and taking values in a specific domain $(u_1,\ldots,u_k)\in U\subset\Real^k$.

The full development of this ideas applied to control of mechanical systems can be seen in \cite{BULE2005}. It contains not only the state of the art and the work developed by the authors but  a detailed bibliography of other authors work. Applications in robotics and other fields, and recent developments, can be seen in the included references and in the web pages of the authors. In particular the importance of the so called {\bf symmetric product}, associated to the Levi--Civita connection, in the study of controllability of Newtonian control systems. The controllability of the above control mechanical system depends on properties of the symmetric algebra generated by the products $\ll F^i,F^j\gg=\nabla_{F^i}F^j+\nabla_{F^j}F^i$ of the control vector fields. Furthermore, it contains a complete description of the motion of a rigid body in this Riemannian approach.

In reference \cite{CORTES2002}, Ph. D. dissertation of the author, and in the subsequent research, there are several studies and applications in different specific  fields of  control of mechanical systems, in particular from the Riemannian point of view. In his personal web page there is a complete account of his developments and collaborators.

Optimal control of Newtonian systems is another topic where this Riemannian approach gives specific insight. Once again, the web pages of the above authors, in particular A. D. Lewis, are a good source of information on this topic. See also \cite{mbl-mcml-2009, CMZ-10}, and references therein, for an study of problems of optimal control in mechanical systems and the existence of special kind of solutions.

Another significant author working on this topic is  Arjan van der Schaft whose web page gives account of its developments with collaborators in the subject of control of mechanical and electromechanical systems. The use of different geometrical tools is permanent including the Riemannian approach.

\subsection{Other developments}

There are other interesting topics not included in this manuscript, for example forces depending on higher order derivatives, as elastic forces, used in mechanical models of continuous media. Sometimes these models include constraints depending also on second or third order derivatives. In \cite{Neimark-Fufaev}, a classical reference in nonholonomic systems, there are several examples of this type of systems. The approach in this book is classical and there will be interesting to develop some of the chapters in a more geometric terms, in particular in a Riemannian setting. See \cite{CILM-2004} for an interesting an pioonnering geometrical approach.

Moreover, not any reference is made in this manuscript to numerical methods of integration of dynamical equations, including those called ``geometric integrators" whose origin is in some geometric formulations of classical mechanics.

\section{Conclusions}
We have developed the analytical mechanics from a Riemannian geometry perspective including systems with constraints both holonomic and nonholonomic and non--autonomous systems. As specific topics with clear insight with our viewpoint, apart from the general theory, we include Hamilton--Jacobi vector fields and equation, the Jacobi metric and a specific Noether theorem for infinitesimal symmetries of the system. As applications we give the Euler equation for fluids as a Newtonian conservative system and one relation between specific solutions to the Hamilton--Jacobi equation an Schr\"odinger equation.

To finish we include some comments on control and optimal control of mechanical systems from a Riemannian perspective.

There are more topics, and authors working on them, where the Riemannian approach gives a special insight and sure they will continue giving new results an applications in different fields.

\section*{Acknowledgements}

\hspace{6
mm}Some ideas of this work come from a course given along the years in our Faculty of Mathematics and Statistics at the UPC for the students of the Mathematics degree. Both the course and this document would not have been possible without the permanent collaboration and friendship of my colleagues Narciso Rom\'an--Roy and Xavier Gr\`acia along the years. My deep thanks to them.

My former Ph.\,D. students Javier Yaniz--Fern\'andez and Mar\'ia Barbero--Li\~n\'an worked on control and optimal control of mechanical systems with this Riemannian approach. To both my reconnaissance for their dedication.

The author also
acknowledges the financial support from the Spanish Ministerio de Ciencia, 
Innovaci\'on y Universidades project PGC2018-098265-B-C33 and to
 the Secretary of University and Research of the Ministry of Business and 
Knowledge of the Catalan Government project 2017--SGR--932.

I also thank the referee for his careful reading of the manuscript and his comments that have allowed the author to improve some parts of the work.


\end{document}